\newlength\tindent
\newtheorem{theorem}{Theorem}[section]
\newtheorem{lemma}[theorem]{Lemma}
\newtheorem{proposition}[theorem]{Proposition}
\title{Diophantine Inequalities with Primes, Auxiliary Inequalities, Evaluations of the Difference between Consecutive Primes}
\author{Felix Sidokhine}
\begin{document}
\maketitle

\begin{abstract}
The goal of the present paper is to present a method of proving of Diophantine inequalities with primes through the use of auxiliary inequalities and available evaluations of the difference between consecutive primes. We study the Legendre - Ingham's problem on primes in intervals $((n - 1)^k, n^k)$ and also a problem on primes in intervals $(\frac{k-1}{k}n, \frac{k}{k-1}n)$ when $k$ is a real number. A number of the new results including an alternative proof of Ingham's theorem with the effectively computable constant and also Ingham's theorem with two primes are proved.
\end{abstract}

\section{Introduction}
Our approach to Diophantine inequalities with primes is based on using auxiliary inequalities. The auxiliary inequalities for an initial problem should be proved over primes. In the case of the problems studied in this work the auxiliary inequalities and the evaluations of the difference between consecutive primes are efficient tools.

\section{Theorems}

\begin{theorem} An interval $((n - 1)^k, n^k)$, where $k \geq \frac{40}{19}$ is any fixed real number, contains a prime $p$ for every integer $n > C$, where $C$ is an effectively computable constant.\end{theorem}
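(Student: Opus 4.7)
The plan is to reduce the statement to a known effective upper bound on gaps between consecutive primes, in the spirit of the ``auxiliary inequality'' approach advocated in the introduction. The value $k = 40/19$ arises as $1/(1 - 21/40)$, which strongly suggests the relevant auxiliary inequality is a Baker--Harman--Pintz style bound of the form $p_{m+1} - p_m < p_m^{21/40}$ valid for all primes $p_m$ exceeding some effectively computable threshold.

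Assuming such a bound, I would proceed by contradiction. Suppose $((n-1)^k, n^k)$ contains no prime for some $n > C$. Let $p_m$ be the largest prime not exceeding $(n-1)^k$ and let $p_{m+1}$ be the next prime; since the interval $((n-1)^k, n^k)$ contains no primes, necessarily $p_{m+1} \geq n^k$. Combining this with $p_m \leq (n-1)^k$ and the auxiliary inequality gives the chain
$$n^k - (n-1)^k \;\leq\; p_{m+1} - p_m \;\leq\; p_m^{21/40} \;\leq\; (n-1)^{21k/40}.$$

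Next I would bound the left-hand side from below. Since $k > 1$, applying the mean value theorem to $t \mapsto t^k$ on $[n-1, n]$ yields $n^k - (n-1)^k \geq k(n-1)^{k-1}$. Substituting and rearranging gives $k \leq (n-1)^{1 - 19k/40}$. The exponent $1 - 19k/40$ is nonpositive precisely when $k \geq 40/19$, so the right-hand side is at most $1$, while $k \geq 40/19 > 1$, a contradiction. The constant $C$ is effectively computable because the underlying prime gap bound is.

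The main obstacle is entirely concentrated in the justification of the auxiliary inequality itself, which is a deep analytic result; once it is granted, the rest of the proof reduces to an elementary comparison between the length of $((n-1)^k, n^k)$ and the prime gap estimate. The specific exponent $40/19$ is the exact value at which these two quantities balance, so the argument is sharp at the boundary. A technical subtlety is that one must use the uniform lower bound $k(n-1)^{k-1}$ for $n^k - (n-1)^k$ (rather than the asymptotic $\sim kn^{k-1}$) so that the boundary case $k = 40/19$ goes through cleanly without any stray error term.
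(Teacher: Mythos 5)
Your proposal is correct, and it rests on exactly the same analytic input as the paper — the Baker--Harman--Pintz bound with exponent $0.525 = 21/40$, which is where the threshold $k \geq 40/19$ comes from in both arguments — but your route to the conclusion is genuinely more direct. The paper proceeds through a chain of auxiliary statements: it first proves that $(p - (k-0.5)p^{\frac{k-1}{k}}, p)$ contains a prime for all large primes $p$ (via an equivalence with the gap inequality $p_n - p_{n-1} < (k-0.5)p_n^{\frac{k-1}{k}}$), then transfers this from prime endpoints to arbitrary integer endpoints by a minimal-counterexample argument, and finally substitutes $n = [m^k]$ and invokes the inclusion $([m^k] - (k-0.5)[m^k]^{\frac{k-1}{k}}, [m^k]) \subset ((m-1)^k, m^k)$ — an inclusion it asserts without verification, and which is precisely where the comparison between the interval length $m^k - (m-1)^k$ and the quantity $(k-0.5)m^{k-1}$ is hidden. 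You collapse all of this into a single contradiction: bracket the prime-free interval by consecutive primes, apply the gap bound, and use the mean value theorem to get the clean lower bound $n^k - (n-1)^k \geq k(n-1)^{k-1}$, which makes the boundary case $k = 40/19$ work without the paper's $(k - 0.5)$ buffer. Two small points you should still pin down: the gap bound most naturally comes out of Baker--Harman--Pintz in the form $p_{m+1} - p_m < p_{m+1}^{0.525}$ (in terms of the larger prime), so you should either derive your $p_m^{21/40}$ form from the interval statement (take $x = p_m + p_m^{0.525}$) or carry the harmless factor $(p_{m+1}/p_m)^{0.525} = 1 + o(1)$ through the chain; and you should note explicitly that the largest prime $p_m \leq (n-1)^k$ exceeds the effective threshold once $n > C$, e.g.\ by Bertrand's postulate. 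Neither affects the validity of the argument.
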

\begin{theorem} An interval $(\frac{k-1}{k}n,\frac{k}{k-1}n)$, where $k \geq 2$ is any fixed real number, contains two primes for every integer $n > C$, where $C$ is an effectively computable constant.\end{theorem}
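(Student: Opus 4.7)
The plan is to reduce the two-prime statement to a single numerical inequality in which the interval length $\tfrac{k}{k-1}n - \tfrac{k-1}{k}n = \tfrac{2k-1}{k(k-1)}n$ is compared against the known gap between consecutive primes. For fixed $k \geq 2$ this length is linear in $n$, while an effective Ingham-type bound supplies $p_{i+1} - p_i \leq C_0 p_i^\theta$ for some $\theta < 1$ and all $p_i$ beyond an effectively computable threshold; equivalently, every interval $(x, x + C_0 x^\theta]$ with $x$ sufficiently large contains a prime. The strategy is to apply this statement twice to extract two primes from $I_n := (\tfrac{k-1}{k}n, \tfrac{k}{k-1}n)$.

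Setting $a = \tfrac{k-1}{k}n$, the first application yields a prime $p_1 \in (a, a + C_0 a^\theta]$. Since $p_1 \geq a$ is still above the threshold, a second application to $p_1$ gives a prime $p_2$ with $p_1 < p_2 \leq p_1 + C_0 p_1^\theta$. Both primes lie in $I_n$ as soon as the auxiliary inequality
\[
    C_0\, a^\theta + C_0\,(a + C_0\, a^\theta)^\theta \;<\; \tfrac{2k-1}{k(k-1)}\, n
\]
holds. Its left-hand side is $O(n^\theta)$ while the right-hand side is linear in $n$, so for each fixed $k \geq 2$ the inequality is satisfied for every integer $n$ exceeding some effectively computable constant $C = C(k,\theta,C_0)$. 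The verification of this auxiliary inequality is routine (monotonicity and a single power comparison), in keeping with the methodology advertised in the introduction.

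The main obstacle is not this elementary step but the input it relies on: the constant $C$ is effective only if the gap estimate $p_{i+1} - p_i \leq C_0 p_i^\theta$ is itself effective, with an explicit exponent $\theta < 1$ and an explicit $C_0$. This is exactly what the effective form of Ingham's theorem announced in the abstract is meant to provide, and any such estimate will suffice. Quantitatively, since $\tfrac{2k-1}{k(k-1)} \sim \tfrac{2}{k}$ as $k \to \infty$, solving the auxiliary inequality forces $n$ to grow roughly like $(C_0 k)^{1/(1-\theta)}$, so the threshold $C$ deteriorates with $k$; this is harmless because the theorem holds $k$ fixed.
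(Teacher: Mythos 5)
Your argument is correct, but it takes a genuinely different route from the paper's. You extract both primes by iterating an effective power-saving gap bound $p_{i+1}-p_i\leq C_0 p_i^{\theta}$ twice from the left endpoint $a=\tfrac{k-1}{k}n$ and then check that the sum of two gaps, which is $O(n^{\theta})$, fits inside the interval of length $\tfrac{2k-1}{k(k-1)}n$. The paper instead splits the interval at $n$ and proves two separate one-prime statements --- that $(\tfrac{k-1}{k}n,n)$ and $(n,\tfrac{k}{k-1}n)$ each contain a prime --- reducing them to the relative gap inequalities $p_n-p_{n-1}<p_n/k$ and $p_n-p_{n-1}<p_{n-1}/(k-1)$, and verifying both simultaneously from Dusart's explicit estimate $p_{n+1}\leq p_n(1+0.5/\ln^2 p_n)$, which yields the concrete constant $C=\max(p_r,p_{463})$ with $p_{r-1}<e^{\sqrt{k}}<p_r$; it also includes a minimal-counterexample step to pass from intervals anchored at primes to intervals anchored at arbitrary integers, which your endpoint-based argument avoids entirely. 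The trade-off is that your route imports the much deeper Baker--Harman--Pintz input, whose effectivity is only asserted ``with enough effort,'' whereas the interval here has length linear in $n$, so any effective bound of the form $p_{i+1}-p_i=o(p_i)$ suffices; substituting Dusart's bound into your auxiliary inequality would make $C$ genuinely computable at no cost. The paper's decomposition also gives the marginally stronger conclusion that the two primes lie on opposite sides of $n$, though neither the theorem statement nor its later application requires this.
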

\begin{theorem} An interval $((n - 1)^3, n^3)$ contains primes for every integer $n > C$, where $C$ is an effectively computable constant.\end{theorem}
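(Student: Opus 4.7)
The plan is to derive Theorem~3 from Theorem~1 by specializing to $k = 3$, and then to extract more than one prime by re-running the auxiliary inequality on the unused portion of the interval. Since $3 > 40/19$, Theorem~1 applies directly and furnishes at least one prime $p_1 \in ((n-1)^3, n^3)$ for every $n > C_1$, with $C_1$ the effective constant obtained from Theorem~1 once $k$ is fixed at $3$.

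To produce additional primes, I would first localize $p_1$. The auxiliary inequality behind Theorem~1 bounds $p_1$ by $(n-1)^3 + g((n-1)^3)$, where $g(x)$ is the effective evaluation of the difference between consecutive primes near $x$ that the paper relies on. The interval $((n-1)^3, n^3)$ has length $3n^2 - 3n + 1$, of order $n^2$, while $g((n-1)^3)$ is of order $n^{3\vartheta}$ for the exponent $\vartheta$ of the available prime-gap bound; the fact that $k = 3$ comfortably clears the threshold $40/19$ corresponds exactly to the condition $\vartheta < 2/3$. Under that condition the residual interval $(p_1, n^3)$ still has length $\gg n^2 \gg g(n^3)$, and so contains a second prime $p_2$. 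Iterating on the new residual $(p_2, n^3)$ yields any bounded number of primes, which justifies the plural ``primes'' in the statement.

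The main step that needs care is the bookkeeping. To turn ``$n$ sufficiently large'' into an explicit effectively computable $C$, I would substitute the concrete effective bound $g(x) \le c\,x^{\vartheta}$ used in the proof of Theorem~1 into the inequality $c\,(n-1)^{3\vartheta} + c\,n^{3\vartheta} < 3n^2 - 3n + 1$ (and its analogues for further iterations), expand $(n-1)^3 = n^3 - 3n^2 + 3n - 1$, and solve for the least admissible integer $n$. The quality of $C$ is inherited directly from the quality of the prime-gap input, so the principal obstacle is not conceptual but rather the choice of an evaluation of the difference between consecutive primes whose explicit constants are small enough to make $C$ a usable number rather than an astronomical one.
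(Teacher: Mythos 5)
Your opening step coincides exactly with the paper's own proof: the paper obtains this theorem by specializing the ``exponential'' theorem to $k=3>40/19$, notes the inequality $p_n-p_{n-1}<2.5\,p_n^{0.666}$, and stops there, proving only a single prime (the plural in the statement is dealt with separately, as Theorem 4). Where you diverge is in how you justify more than one prime. The paper's route to multiplicity is the ``fractional'' theorem: it shows $(\frac{k-1}{k}n,\frac{k}{k-1}n)$ contains two primes for $k\ge 2$, then transfers this to $((g-1)^3,g^3)$ by the rather delicate choice $n_0=(g(g-1))^{1.5}+\theta$ with $|\theta|\le\frac12$ and a comparison of $C(g)=g^2([g^{2/19}]+1)$ against $(g(g-1))^{1.5}$. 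Your iteration on the residual interval $(p_1,n^3)$ is more elementary and more direct: since the interval has length of order $n^2$ while the effective gap bound at height $n^3$ is of order $n^{3\vartheta}$ with $\vartheta=0.525$, you in fact get on the order of $n^{2-3\vartheta}=n^{0.425}$ primes, far more than the paper's two, with no new input beyond the Baker--Harman--Pintz bound already used in Lemma 3.4. One small correction to your bookkeeping: the prime that the paper's machinery actually produces lies in $([m^3]-2.5[m^3]^{2/3},[m^3])$, i.e.\ near the \emph{top} of $((n-1)^3,n^3)$, so you should not localize $p_1$ near $(n-1)^3$ by appeal to Theorem 1; instead apply the gap bound directly to the least prime exceeding $(n-1)^3$ (which is at most $(n-1)^3+c\,(n-1)^{1.575}$ for an effective $c$) and iterate upward from there. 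With that adjustment the argument is sound and effective.
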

\begin{theorem} An interval $((n - 1)^3, n^3)$ contains no less than two primes for every integer $n > C$, where C is an effectively computable constant.\end{theorem}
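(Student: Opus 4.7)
My plan is to extend Theorem~3 by iterating the effective consecutive-prime-gap bound once more: once one prime has been located in $((n-1)^3, n^3)$, I want the very next prime to lie below $n^3$ as well. The interval has length $n^3 - (n-1)^3 = 3n^2 - 3n + 1$, while its lower endpoint is of order $n^3$, so any gap bound of the form $p_{k+1} - p_k \le c\, p_k^{\theta}$ with $\theta < 2/3$ leaves room for two consecutive gaps. This places the argument squarely within the ``auxiliary inequality plus evaluation of the difference between consecutive primes'' schema already used for Theorems~1 and~3.

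Concretely, I would invoke the same effective Ingham-type bound $p_{k+1} - p_k \le c\, p_k^{\theta}$ that drives Theorem~3, valid for all $p_k > C_0$ with effectively computable $c, C_0$ and some $\theta < 2/3$ (for instance $\theta = 5/8 + \varepsilon$). Setting $a = (n-1)^3$ and letting $p_1$ denote the smallest prime exceeding $a$, the gap bound applied at the largest prime $\le a$ gives $p_1 \le a + c\, a^{\theta}$; applying it again at $p_1$ yields a further prime $p_2$ with
\[
p_2 \le p_1 + c\, p_1^{\theta} \le a + c\, a^{\theta} + c\bigl(a + c\, a^{\theta}\bigr)^{\theta} \le a + 2c\, a^{\theta}\bigl(1 + O(a^{\theta-1})\bigr),
\]
where I have used $(1+x)^{\theta} \le 1 + \theta x$ for $x \ge 0$ and $\theta \le 1$. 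The resulting auxiliary inequality then reads $2c\, a^{\theta}(1 + o(1)) < 3n^2 - 3n + 1$; since $3\theta < 2$ the left side is $o(n^2)$, so the inequality holds for all $n$ beyond an effectively computable threshold $C$ (depending on $c$, $\theta$ and $C_0$), and both $p_1, p_2$ then lie strictly inside $((n-1)^3, n^3)$.

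The main obstacle will be tracking the effective constants through the two chained applications of the gap bound and writing the auxiliary inequality in a clean closed form. No new analytic input beyond what Theorem~3 uses is needed: the same critical margin $3\theta < 2$ suffices, but the leading constant in the auxiliary inequality is doubled (from $c\,a^{\theta}$ to roughly $2c\,a^{\theta}$), so the threshold $C$ will exceed that of Theorem~3 by a factor depending on $1/(2-3\theta)$, and its explicit determination requires careful bookkeeping through the iterated bound.
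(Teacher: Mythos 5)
Your proof is correct, but it takes a genuinely different route from the paper. The paper deduces the two-prime version of Ingham's theorem from its ``fractional'' theorem: it chooses $k = g^{1.5}/(g^{1.5}-(g-1)^{1.5})$ and $n_0 = (g(g-1))^{1.5}+\theta$ with $|\theta|\le \tfrac12$, so that the interval $(\tfrac{k-1}{k}n_0, \tfrac{k}{k-1}n_0)$ is a slight perturbation of $((g-1)^3, g^3)$, then checks that the perturbation terms $|\theta|(\tfrac{g-1}{g})^{1.5}$ and $|\theta|(\tfrac{g}{g-1})^{1.5}$ are below $1$ and that $n_0$ exceeds the effective threshold $C(g)=g^2([g^{2/19}]+1)$ obtained by comparing the bound $p_n-p_{n-1}<p_n^{0.525}$ with $p_{n-1}/k$. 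You instead chain the gap estimate twice directly: the first prime $p_1$ above $a=(n-1)^3$ satisfies $p_1\le a+c\,a^{\theta}$, the next one satisfies $p_2\le a+2c\,a^{\theta}(1+o(1))$, and since $3\theta<2$ this is eventually smaller than $a+3n^2-3n+1=n^3$. Your argument is shorter and more elementary; it avoids the auxiliary parameter $k(g)$, the rounding analysis, and the detour through Ramanujan-type prime counts, and it visibly generalizes to any fixed number $m$ of primes in $((n-1)^k,n^k)$ for $k>1/(1-\theta)$ by iterating $m$ times. What the paper's route buys is the ``fractional'' theorem itself (two primes in $(\tfrac{k-1}{k}n,\tfrac{k}{k-1}n)$) as a reusable intermediate result, whereas your approach produces only the target statement. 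Both rest on the same analytic input, an effective bound $p_{n}-p_{n-1}\ll p_n^{\theta}$ with $\theta<2/3$, so the effectivity of your constant $C$ is on the same footing as the paper's.
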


\section{Proof of Theorems}

\subsection{``Exponential'' Theorem}

\begin{theorem}\label{thm1} An interval $((n - 1)^k, n^k)$, where $k \geq 40/19$ is an arbitrary fixed real number, contains a prime $p$ for every integer $n > C$, where $C$ is an effectively computable constant. \end{theorem}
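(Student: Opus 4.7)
My plan is to exploit an unconditional upper bound for the prime gap of the form $p_{m+1}-p_m<p_m^{\theta}$, valid for $p_m$ larger than an effectively computable threshold, and then to check that for $k\geq 40/19$ the length of the interval $((n-1)^{k},n^{k})$ outgrows that bound. The natural choice is $\theta=21/40$, because the equation $1-\theta=1/k$ solves to $k=40/19$ precisely; any of the standard effective versions of the Baker--Harman--Pintz estimate (or, with a slightly larger $\theta$, Ingham's $p_{m+1}-p_m\ll p_m^{5/8+\varepsilon}$ and its refinements) will serve as the required ``auxiliary inequality'' in the spirit of the introduction.

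The argument itself should be a three-line pigeonhole. Let $p$ be the largest prime with $p\leq (n-1)^{k}$, and let $p'$ be the next prime. To conclude that $p'\in((n-1)^{k},n^{k})$ it suffices to verify
\[
 p'-p \;\leq\; n^{k}-(n-1)^{k}.
\]
By the auxiliary inequality (applied as soon as $(n-1)^{k}$ exceeds the effective threshold of the gap estimate) the left-hand side is at most $(n-1)^{21k/40}$. For the right-hand side I would use $n^{k}-(n-1)^{k}\geq k(n-1)^{k-1}$, which follows from the mean value theorem. Thus it remains to check
\[
 k(n-1)^{k-1}\;\geq\;(n-1)^{21k/40},
\]
equivalently $(n-1)^{(k-1)-21k/40}\geq 1/k$, and the exponent $(19k-40)/40$ is non-negative precisely for $k\geq 40/19$.

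When $k>40/19$ this is immediate for all sufficiently large $n$, with an effective $C$ depending on $k$ (and on the effective threshold in the gap estimate). The only mildly delicate case is the boundary value $k=40/19$, where the two sides of the inequality carry the same power of $(n-1)$ and one must compare coefficients: the inequality reduces to $k\geq 1$, which is manifestly true, so the boundary case still works for $n$ beyond an effective constant.

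The main obstacle, and the only non-routine ingredient, is the input prime-gap bound: one must cite a version with an \emph{effectively computable} implied constant and threshold, since the theorem advertises an effective $C$. Once that is in hand, the comparison of exponents is elementary and the rest of the argument is the short calculation sketched above. I would therefore spend most of the write-up stating the auxiliary inequality precisely (with named constants) and would relegate the exponent comparison to a couple of displayed lines.
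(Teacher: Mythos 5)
Your proof is correct, and it rests on exactly the same analytic input as the paper: the Baker--Harman--Pintz bound $p_{m+1}-p_m<p_m^{0.525}$ with its effectively computable threshold, together with the observation that the exponent $(k-1)-\tfrac{21k}{40}=\tfrac{19k-40}{40}$ is non-negative precisely when $k\geq 40/19$. The difference is purely one of packaging. You run a direct pigeonhole at the left endpoint: take the largest prime $p\leq(n-1)^k$, bound its successor's position by the gap estimate, and compare with the interval length via the mean value theorem, handling the boundary case $k=40/19$ by comparing coefficients. The paper instead routes the argument through a chain of auxiliary statements --- a prime-anchored interval $(p-(k-0.5)p^{(k-1)/k},\,p)$, an equivalence with the gap inequality $p_n-p_{n-1}<(k-0.5)p_n^{(k-1)/k}$, a transfer from primes to arbitrary integers by a minimal-counterexample argument, and finally the containment $([m^k]-(k-0.5)[m^k]^{(k-1)/k},[m^k])\subset((m-1)^k,m^k)$. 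Your version is shorter and arguably cleaner; the paper's version isolates the ``auxiliary inequality'' as a reusable object, which is the methodological point it wants to advertise. Two small details to tidy in a final write-up: the gap estimate must be applied with $p>x_0$ rather than $(n-1)^k>x_0$ (harmless, since $p>(n-1)^k-((n-1)^k)^{0.525}$ once $(n-1)^k$ is large), and the concluding inequality should be strict so that $p'$ lands in the open interval $((n-1)^k,n^k)$; both follow immediately from the strictness of the gap bound.
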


\begin{proposition}\label{prop1} An interval $(p - (k - 0.5)p^\frac{k-1}{k}, p)$, where $k \geq 40/19$ is any fixed real number, contains a prime $q$ for every prime $p > C$, where $C$ is an effectively computable constant. \end{proposition}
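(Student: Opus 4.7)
The plan is to deduce the statement from an effective short-interval result by taking $q$ to be the largest prime strictly below $p$, and then verifying that the gap $p - q$ is dominated by $(k - 0.5)\, p^{(k-1)/k}$ once $p$ exceeds an effectively computable constant. The exponent $40/19$ and the coefficient $k - 0.5$ are calibrated so that the critical case $k = 40/19$ corresponds exactly to the Baker--Harman--Pintz exponent $21/40$, with a little slack in the constant (namely $k - 0.5 = 61/38 > 1$).

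The first step is to invoke a prime-gap estimate of the form: there is an effectively computable $C_{0}$ such that for every $x \geq C_{0}$ the interval $(x,\, x + x^{21/40}]$ contains a prime. Applied with $x = q$, where $q = p_{n-1}$ is the prime immediately preceding $p = p_{n}$, this yields
\[
p - q \;\leq\; q^{21/40} \;\leq\; p^{21/40}
\]
as soon as $q \geq C_{0}$.

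The second step is a comparison of exponents and constants. Because $k \geq 40/19$ one has $(k - 1)/k \geq 21/40$, hence $p^{(k-1)/k} \geq p^{21/40}$ for all $p \geq 1$; also $k - 0.5 \geq 61/38 > 1$. Combining these,
\[
(k - 0.5)\, p^{(k-1)/k} \;>\; p^{(k-1)/k} \;\geq\; p^{21/40} \;\geq\; p - q,
\]
so $q$ belongs to $\bigl(p - (k - 0.5)\, p^{(k-1)/k},\, p\bigr)$, as required. The effective constant $C$ of the proposition can then be taken as any explicit upper bound for $C_{0}$ plus a fixed safety margin handling the passage from $q \geq C_{0}$ to $p \geq C$.

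The main obstacle is securing a version of the Baker--Harman--Pintz theorem with an effectively computable threshold $C_{0}$; the exponent $21/40$ is the tightest ingredient and cannot be relaxed at the endpoint $k = 40/19$. One could substitute a weaker exponent $\theta > 21/40$ coming from an earlier, more transparently effective result (Ingham, Huxley, Heath--Brown), which would simplify this step but would force $k$ to be strictly greater than the corresponding threshold $1/(1-\theta)$, so the critical value $k = 40/19$ genuinely depends on the full $21/40$ estimate.
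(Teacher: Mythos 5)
Your proposal is correct and follows essentially the same route as the paper: reduce the claim to the gap bound $p_n - p_{n-1} < (k-0.5)p_n^{(k-1)/k}$ by taking $q$ to be the prime immediately preceding $p$, invoke the Baker--Harman--Pintz estimate with exponent $0.525 = 21/40$, and observe that $k \geq 40/19$ gives $(k-1)/k \geq 21/40$ while $k - 0.5 > 1$ absorbs the constant. The paper merely packages the first reduction as a separate equivalence lemma and applies the short-interval theorem at $x = p_n$ rather than at $x = p_{n-1}$; these are cosmetic differences.
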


\begin{lemma}\label{lem1} Proposition \ref{prop1} is true if and only if for every pair primes $p_{n-1}, p_n$ where $p_{n-1} > C$ satisfies the following inequality:

\begin{equation}\label{eq1}
p_n - p_{n-1} < (k - 0.5) p_n^{\frac{k-1}{k}}
\end{equation}
\end{lemma}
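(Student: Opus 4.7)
The plan is to establish both directions by the standard observation that Proposition~\ref{prop1} is just a restatement about the predecessor prime of $p$. Concretely, for a prime $p_n>C$ the interval $(p_n-(k-0.5)p_n^{(k-1)/k},p_n)$ contains a prime if and only if the largest prime strictly below $p_n$, namely $p_{n-1}$, lies in that interval, which is precisely inequality \eqref{eq1}.

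For the forward direction I would assume Proposition~\ref{prop1} and take consecutive primes $p_{n-1}<p_n$ with $p_{n-1}>C$; then $p_n>C$ as well, so Proposition~\ref{prop1} applied to $p=p_n$ produces a prime $q$ satisfying $p_n-(k-0.5)p_n^{(k-1)/k}<q<p_n$. Because $p_{n-1}$ is the greatest prime less than $p_n$, we must have $q\le p_{n-1}$, hence
\[
p_{n-1}\ge q> p_n-(k-0.5)p_n^{(k-1)/k},
\]
which rearranges to \eqref{eq1}.

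For the converse I would assume \eqref{eq1} and, given a prime $p>C'$, let $p_{n-1}$ denote the largest prime below $p=p_n$. For $C'$ chosen slightly larger than $C$ (this only replaces $C$ by the next prime $p$ such that the preceding prime still exceeds $C$, so it remains effectively computable), the hypothesis applies and gives $p_n-p_{n-1}<(k-0.5)p_n^{(k-1)/k}$. Hence $p_{n-1}$ lies in $(p-(k-0.5)p^{(k-1)/k},p)$ and serves as the prime $q$ required by Proposition~\ref{prop1}.

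There is no genuine mathematical obstacle here: the only minor point is the bookkeeping needed to ensure that the constant $C$ governing the assumption and the constant governing the conclusion can be taken to agree (up to an effectively computable adjustment), which follows at once from the fact that for all sufficiently large primes $p$ the preceding prime still exceeds any fixed bound.
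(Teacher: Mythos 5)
Your proof is correct and follows essentially the same route as the paper: both directions reduce to the observation that the interval $(p_n-(k-0.5)p_n^{(k-1)/k},\,p_n)$ contains a prime exactly when it contains the predecessor prime $p_{n-1}$, which is a restatement of inequality \eqref{eq1}. You are in fact slightly more careful than the paper about reconciling the constant in the hypothesis ($p_{n-1}>C$) with the one in Proposition \ref{prop1} ($p>C$), a bookkeeping point the paper passes over silently.
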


\begin{proof}
Let proposition \ref{prop1} be true for every $p_n > C$, hence there is $q \in (p_n - (k - 0.5) p_n^{\frac{k-1}{k}}, p_n)$ such that $q \leq p_{n-1} < p_n$ and $p_{n-1}$ also belongs to this interval. Thus

\begin{equation}
p_n - p_{n-1} < p_n - (p_n - (k - 0.5)p_n^{\frac{k-1}{k}}) = (k - 0.5) p_n^\frac{k-1}{k}
\end{equation}

Let $p_n - p_{n-1} < (k - 0.5)p_n^\frac{k-1}{k}$ be true for any pair of primes $p_{n-1},p_n$ with $p_{n-1}> C$, then

\begin{equation}
p_n - (k - 0.5)p_n^\frac{k-1}{k} < p_{n-1} < p_n
\end{equation}

and $p_{n-1}$ belongs to $(p_n - (k - 0.5)p_n^\frac{k-1}{k}, p_n)$, so this interval contains a prime number.
\end{proof}

\begin{lemma}\label{lem2}
There exists an effectively computable constant $C$ for any fixed real number $k \geq 40/19$ such that for every pair primes $p_{n-1}, p_n$ with $p_{n-1}> C$, the following inequality holds:

\begin{equation}
p_n - p_{n-1} < (k - 0.5) p_n^\frac{k-1}{k}
\end{equation}
\end{lemma}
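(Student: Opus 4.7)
The plan is to deduce the lemma directly from the Baker--Harman--Pintz bound on prime gaps, which states that there is an effectively computable constant $C_0$ such that $p_n - p_{n-1} < p_n^{21/40}$ for every pair of consecutive primes with $p_{n-1} > C_0$. The exponent $40/19$ in the hypothesis is chosen precisely so that $(k-1)/k \geq 21/40 = 0.525$, matching this bound on the nose.

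First I would fix $k \geq 40/19$ and observe that the map $k \mapsto \tfrac{k-1}{k} = 1 - \tfrac{1}{k}$ is increasing, so
\begin{equation}
\frac{k-1}{k} \;\geq\; 1 - \frac{19}{40} \;=\; \frac{21}{40}.
\end{equation}
For any prime $p_n \geq 1$ this yields $p_n^{21/40} \leq p_n^{(k-1)/k}$. I would also record the harmless fact that $k - 0.5 \geq \tfrac{40}{19} - \tfrac{1}{2} = \tfrac{61}{38} > 1$, so the coefficient on the right-hand side of the target inequality is at least~$1$.

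Next I would invoke the Baker--Harman--Pintz theorem to produce the effective $C_0$ above, then take $C := C_0$. For any pair of consecutive primes $p_{n-1} < p_n$ with $p_{n-1} > C$, chaining the two observations gives
\begin{equation}
p_n - p_{n-1} \;<\; p_n^{21/40} \;\leq\; p_n^{(k-1)/k} \;<\; (k - 0.5)\, p_n^{(k-1)/k},
\end{equation}
which is exactly the inequality asserted by the lemma.

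The only substantive issue is the adjective \emph{effectively computable}: the whole argument reduces to citing a prime-gap theorem whose constant is effective. The Baker--Harman--Pintz constant is effective (their sieve argument yields an, admittedly enormous, explicit threshold), so this is legitimate. Should one prefer a more classical input, Huxley's bound with exponent $7/12 + \varepsilon$ could be substituted, but then the hypothesis would have to be weakened to $k \geq 12/5$ rather than $40/19$; the choice $40/19$ is calibrated exactly to the $0.525$ exponent, which is the genuine obstacle to doing better.
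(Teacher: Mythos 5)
Your proposal is correct and takes essentially the same route as the paper: both deduce the lemma from the Baker--Harman--Pintz bound $p_n - p_{n-1} < p_n^{0.525}$, using that $k \geq 40/19$ forces $(k-1)/k \geq 21/40$ and that the factor $k - 0.5 > 1$ is harmless. Your write-up is in fact somewhat more explicit than the paper's about why the exponent and coefficient comparisons go through.
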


\begin{proof}
Using Theorem 1 from \cite{Baker:2001aa} (Theorem 1. For all $x > x_0$, the interval $[x -  x^{0.525}, x]$ contains prime numbers. With enough effort, the value of $x_0$ could be determined effectively.) we can claim that any pair of neighbouring primes $p_{n-1}, p_n$ with $p_{n-1} > x_0$, which in turn implies that:
\begin{equation}
p_n - p_{n-1} < p_n^{0.525}
\end{equation}

Thus the inequality (4) is true for any real $k\geq 40/19$. So $C = p_r$ where $p_{r-1}< x_0< p_r$.
\end{proof}

\begin{proof}[Proof of proposition \ref{prop1}]
Proposition \ref{prop1} is true due to lemmas \ref{lem1}, \ref{lem2}
\end{proof}

\begin{proposition}\label{prop2}
An interval $(n - (k - 0.5)n^{\frac{k-1}{k}}, n)$, where $k \geq 40/19$ is any fixed real number, contains a prime $p$ for every integer $n > C$, where $C$ is an effectively computable constant.
\end{proposition}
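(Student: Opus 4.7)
The plan is to deduce Proposition \ref{prop2} directly from the cited result of Baker (as used in Lemma \ref{lem2}), applying it to the real number $x = n$ rather than to a prime $p_n$. Baker's theorem guarantees that for every real $x > x_0$, the interval $[x - x^{0.525}, x]$ contains a prime; setting $x = n$ for $n > x_0$ therefore yields a prime $p$ with $n - n^{0.525} \leq p \leq n$. It remains to show that this prime in fact lies in the (wider) interval $(n - (k-0.5) n^{(k-1)/k}, n)$.

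The key arithmetic step is the inequality $n^{0.525} \leq (k - 0.5)\, n^{(k-1)/k}$ for $k \geq 40/19$ and $n \geq 1$. I would verify this from two observations: first, the function $k \mapsto (k-1)/k = 1 - 1/k$ is increasing, so $k \geq 40/19$ gives $(k-1)/k \geq 1 - 19/40 = 21/40 = 0.525$, and hence $n^{(k-1)/k} \geq n^{0.525}$; second, $k - 0.5 \geq 40/19 - 1/2 = 61/38 > 1$. Multiplying, $(k - 0.5)\, n^{(k-1)/k} \geq n^{0.525} \geq n - p$, which gives $p > n - (k - 0.5) n^{(k-1)/k}$ (strict since $k - 0.5 > 1$ strictly, or simply because $p < n$ when $n$ is composite, and when $n$ is prime we can take $p = n$ for the non-strict side and note $p$ lies in the open interval on the left).

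The effective constant $C$ can be taken to be the same $p_r$ constructed in Lemma \ref{lem2}, since the proof relies on exactly the same invocation of Baker's theorem.

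There is essentially no obstacle here: the proposition is a direct corollary of Baker's theorem once one observes that the exponent $0.525$ and the constant $1$ are absorbed into $(k-1)/k$ and $k - 0.5$ respectively for $k \geq 40/19$. The only conceptual point worth emphasising is that Proposition \ref{prop1} requires $p$ to be prime, whereas Proposition \ref{prop2} requires only $n \in \mathbb{Z}$; Baker's theorem accommodates the latter without modification, so no new machinery is needed.
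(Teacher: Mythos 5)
Your route is genuinely different from the paper's. The paper never applies Baker's theorem to the integer $n$ directly: it first converts Baker's bound into the prime-gap inequality $p_n - p_{n-1} < (k-0.5)p_n^{(k-1)/k}$ (Lemma \ref{lem2}), deduces Proposition \ref{prop1} for prime endpoints via Lemma \ref{lem1}, and then transfers from prime endpoints to arbitrary integer endpoints by a minimal-counterexample nesting argument (Lemma \ref{lem3}): if $(n_0 - (k-0.5)n_0^{(k-1)/k}, n_0)$ were prime-free, then the corresponding interval ending at the next prime $p_n > n_0$ would also be prime-free, contradicting Proposition \ref{prop1}. Your direct application of Baker's theorem to $x = n$ short-circuits that entire transfer, and your exponent bookkeeping is correct: $k \geq 40/19$ gives $(k-1)/k \geq 21/40 = 0.525$ and $k - 0.5 \geq 61/38 > 1$, so $[n - n^{0.525}, n]$ sits inside $[n - (k-0.5)n^{(k-1)/k}, n]$ and the lower endpoint is passed strictly.

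There is, however, a genuine (if small) gap at the right endpoint. The proposition asks for a prime in the \emph{open} interval $(n - (k-0.5)n^{(k-1)/k}, n)$, whereas Baker's theorem only guarantees a prime in the closed interval $[n - n^{0.525}, n]$. When $n$ is itself prime, the guaranteed prime could be $n$ itself, which does not lie in the open interval; your parenthetical remark that "when $n$ is prime we can take $p = n$ for the non-strict side" does not prove the statement as written, since the statement demands $p < n$. The repair is short and should be made explicit: either apply Baker's theorem to $x = n-1$ and check that $1 + (n-1)^{0.525} < (k-0.5)n^{(k-1)/k}$ (which holds for all $n \geq 3$ because $k - 0.5 \geq 61/38$, so the slack factor $61/38 - 1$ absorbs the additive $1$), or, in the case that $n$ is prime, invoke the prime-gap inequality of Lemma \ref{lem2} with $p_n = n$, which places the preceding prime $p_{n-1}$ strictly inside the interval. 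With that one sentence added, your argument is complete, and it is noticeably more direct than the paper's.
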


\begin{lemma}\label{lem3}
Proposition \ref{prop2} is true for every integer $n > C$ if and only if proposition \ref{prop1} is true for every prime $p > C$.
\end{lemma}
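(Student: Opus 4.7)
The plan is to handle the two directions of the biconditional separately and observe that both reduce to essentially trivial verifications once the prime–gap estimate from Baker's theorem (already invoked in Lemma \ref{lem2}) is on the table.

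The forward direction (Proposition \ref{prop2} $\Rightarrow$ Proposition \ref{prop1}) is immediate: every prime is an integer, so applying the conclusion of Proposition \ref{prop2} at the integer $n = p$ produces exactly a prime in $(p - (k-0.5)p^{(k-1)/k}, p)$, which is Proposition \ref{prop1}.

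For the reverse direction (Proposition \ref{prop1} $\Rightarrow$ Proposition \ref{prop2}), I would take an integer $n > C$ and split on whether $n$ is prime. If $n$ is prime, Proposition \ref{prop1} applied to $p = n$ already supplies a prime in the desired interval. If $n$ is composite, let $p$ be the largest prime with $p < n$; I claim $p$ itself lies in $(n - (k-0.5)n^{(k-1)/k}, n)$. This requires only that $n - p < (k-0.5)n^{(k-1)/k}$, and by Baker's theorem (as cited in the proof of Lemma \ref{lem2}) we have $n - p < n^{0.525}$ for $n$ sufficiently large. Since $k \geq 40/19$ implies both $(k-1)/k \geq 21/40 = 0.525$ and $k - 0.5 \geq 61/38 > 1$, the inequality
\begin{equation}
n - p < n^{0.525} \leq n^{(k-1)/k} < (k-0.5)\, n^{(k-1)/k}
\end{equation}
holds without further work, completing the reverse direction.

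There is no real obstacle here; the lemma is essentially a bridge that lets later arguments switch freely between "for every prime" and "for every integer". The only delicate point is that the reverse direction silently borrows the same Baker-type gap bound that underwrites Lemma \ref{lem2}, so the effective constant $C$ in Proposition \ref{prop2} must be taken at least as large as the corresponding constant in Proposition \ref{prop1} together with Baker's $x_0$; I would note this explicitly to keep the effective computability claim intact.
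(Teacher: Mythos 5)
Your proof is logically sound, but it takes a genuinely different route from the paper's, and the difference matters for the architecture of the argument. The paper proves the reverse direction as a pure logical reduction with no analytic input: it assumes Proposition \ref{prop2} fails at a minimal integer $n_0$, looks \emph{upward} to the next prime $p_n > n_0$, decomposes $(n_0 - (k-0.5)n_0^{(k-1)/k},\, p_n)$ into prime-free pieces, and uses the monotonicity of $x \mapsto x - (k-0.5)x^{(k-1)/k}$ to show that $(p_n - (k-0.5)p_n^{(k-1)/k},\, p_n)$ is contained in that prime-free set, contradicting Proposition \ref{prop1} at $p_n$. You instead look \emph{downward} from a composite $n$ to the largest prime $p < n$ and bound $n - p$ by $n^{0.525}$ via Baker's theorem. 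That works, but notice it proves Proposition \ref{prop2} outright for composite $n$ without ever invoking Proposition \ref{prop1}; the biconditional becomes materially true rather than a genuine equivalence, and the deep prime-gap input that the paper deliberately quarantines in Lemma \ref{lem2} leaks into what is meant to be a purely formal bridge lemma. The paper's version buys a reduction that would survive with any sufficiently good gap estimate and keeps the single analytic citation in one place; your version buys brevity at the cost of making Lemma \ref{lem1} and the surrounding scaffolding redundant. Your remark about enlarging the constant $C$ to accommodate $x_0$ is well taken --- the paper is itself somewhat cavalier about the constant when it requires $p_{n-1} \geq p_r$ for the prime just below $n_0$ --- but if you want the lemma to play the role the paper assigns it, you should replace the Baker-based step with the containment-and-monotonicity argument.
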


\begin{proof}
Let proposition \ref{prop2} be true, then proposition \ref{prop1} is true for every $p$ prime greater than $C$.

Let proposition \ref{prop1} be true for all primes $p \geq p_r$ where $p_{r-1} < C < p_r$, but assume proposition \ref{prop2} is false for some integers. Let $n_0$ be the minimal integer for which proposition \ref{prop2} does not hold. This implies that the interval $(n_0 - (k - 0.5) n_0^\frac{k-1}{k}, n_0)$ contains no prime numbers. Let $p_{n-1}, p_n$ be such primes that $p_r \leq p_{n-1} < n_0 < p_n$. Then the interval $(p_n - (k - 0.5) p_n^\frac{k-1}{k}, p_n)$ contains no primes. Indeed, $(n_0 - (k - 0.5) n_0^\frac{k-1}{k}, p_n) = (n_0 - (k - 0.5) n_0^\frac{k-1}{k}, n_0) \cup [n_0] \cup(n_0, p_n)$ contains no prime numbers. We have $(p_n - (k - 0.5) p_n^\frac{k-1}{k}, p_n) \subset (n_0 - (k - 0.5) n_0^\frac{k-1}{k}, p_n)$ since $n_0 - (k - 0.5) n_0^\frac{k-1}{k} < p_n - (k - 0.5) p_n^\frac{k-1}{k}$. Thus, we can conclude the interval $(p_n- (k - 0.5)p_n^\frac{k-1}{k}, p_n)$ contains no primes, contradicting proposition \ref{prop1}.
\end{proof}

\begin{proof}[Proof of proposition \ref{prop2}]
According to proposition \ref{prop1} there exists such an integer $C$ that for any prime $p_n \geq C, p_n - p_{n-1} < (k - 0.5) p_n^\frac{k-1}{k}$ is true. According to lemma \ref{lem3}, $(n - (k - 0.5)n^\frac{k-1}{k}, n)$ contain primes for all integers $n > C$.
\end{proof}

\begin{proof}[Proof of Theorem \ref{thm1}]
Since proposition \ref{prop2} is true for all integers $n > C$ so $(n - (k - 0.5)n^\frac{k-1}{k}, n)$ contains a prime $p$. Let $n = [m^k]$ with $[m^k] > C$ then an interval $([m^k] - (k - 0.5)[m^k]^{\frac{k-1}{k}}, [m^k])$ contains a prime $q$. Since $([m^k] - (k - 0.5)[m^k]^{\frac{k-1}{k}}, [m^k])\subset((m - 1)^k, m^k)$ therefore $q$ belongs to $((m - 1)^k, m^k)$ and theorem \ref{thm1} is true.
\end{proof}

\subsubsection{Application}

\begin{theorem}\label{thm2}
(A.E. Ingham, 1937). An interval $((n - 1)^3, n^3)$ contains a prime $p$ for every integer $n$ where $n > C$, where $C$ is an effectively computable constant.
\end{theorem}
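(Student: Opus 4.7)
The plan is to deduce Theorem \ref{thm2} as an immediate specialization of Theorem \ref{thm1}, using the fact that the admissible exponent range $k \geq 40/19$ already established in Theorem \ref{thm1} comfortably covers $k = 3$. Since $40/19 = 2.105\ldots < 3$, the hypothesis on $k$ is satisfied, and there is nothing new to prove about the size of the interval; the work has already been absorbed into Baker--Harman--Pintz-type bounds invoked through Lemma \ref{lem2}.

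First I would verify the numerical inequality $3 \geq 40/19$ and then instantiate Theorem \ref{thm1} with $k = 3$. This yields an effectively computable constant $C_3$ such that for every integer $n > C_3$ the interval $((n-1)^3, n^3)$ contains a prime $p$. I would then observe that the effectivity of $C_3$ follows from the chain of implications already made explicit in the proof of Theorem \ref{thm1}: the constant $x_0$ in Baker's theorem is in principle effective, this propagates through Lemma \ref{lem2} to give an effective threshold for the inequality $p_n - p_{n-1} < (k-0.5)p_n^{(k-1)/k}$, and then through Lemma \ref{lem3} and the final reduction $n = [m^k]$ to give an effective $C_3$.

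There is no real obstacle here; the only point worth being careful about is that the reduction step in the proof of Theorem \ref{thm1}, which sets $n = [m^k]$, forces the threshold for $m$ to be taken slightly larger than the threshold for $n$, since one needs $[m^3] > C$. For $k = 3$ this amounts to requiring $m > C_3^{1/3} + 1$, which is again effective. I would therefore simply record Theorem \ref{thm2} as a corollary, with a one-line proof along the lines of: apply Theorem \ref{thm1} with $k = 3$, which is permissible since $3 \geq 40/19$, and take $C$ to be the corresponding effectively computable constant.
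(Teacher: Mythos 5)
Your proposal matches the paper's own proof: Theorem \ref{thm2} is obtained there exactly by specializing the ``exponential'' theorem to $k = 3 > 40/19$, with the gap bound becoming $p_n - p_{n-1} < 2.5\, p_n^{0.666}$, and the effectivity of $C$ inherited from Baker's constant $x_0$ through Lemmas \ref{lem2} and \ref{lem3}. Your additional remark about adjusting the threshold so that $[m^3] > C$ is a harmless refinement of the same argument.
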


\begin{proof}
Theorem \ref{thm2} is a particular case of the ``exponential'' theorem where $k = 3 > 40/19$. In this case
\begin{equation}
p_n - p_{n-1} < 2.5 p_n^{0.666}
\end{equation}

Thus according to the ``exponential'' theorem there exists an effectively computable constant $C$ such that for every integer $n > C$ an interval $((n - 1)^3, n^3)$ contains a prime number.
\end{proof}

We would like to note that using an evaluation of the difference between consecutive primes in the form $p_n- p_{n-1}=O(p_{n-1}^\theta)$ where $\theta = 0.75 + \epsilon$ is Tchudakoff's constant we would not have been able to prove the theorem, however with Ingham's constant $\theta = 0.625 + \epsilon$, and the more with the constant $\theta = 0.525$ we already are able to do so, \cite{Pintz:2009aa}.

\begin{theorem}\label{thm3}
An interval $((n - 1)^{\sqrt{5}}, n^{\sqrt{5}})$ contains a prime $p$ for every integer $n > C$, where $C$ is an effectively computable constant.
\end{theorem}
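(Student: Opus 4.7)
The plan is to recognize Theorem \ref{thm3} as the special case $k=\sqrt{5}$ of the ``exponential'' Theorem \ref{thm1}, entirely parallel to the way Ingham's Theorem \ref{thm2} is extracted as the case $k=3$. The only genuine work is an arithmetic check that $\sqrt{5}$ lies in the admissible range $k\geq 40/19$; once that is done, Theorem \ref{thm1} applies verbatim.

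First I would verify the inequality $\sqrt{5}\geq 40/19$. Squaring both sides (both positive), this reduces to $5\geq 1600/361$, i.e.\ $1805\geq 1600$, which is immediate. Thus $k=\sqrt{5}$ is a legitimate value for Theorem \ref{thm1}.

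Next, I would apply Theorem \ref{thm1} with $k=\sqrt{5}$ to obtain an effectively computable constant $C$ such that for every integer $n>C$ the interval $((n-1)^{\sqrt{5}},n^{\sqrt{5}})$ contains a prime. For transparency I would also record the concrete consecutive-prime inequality supplied by Lemma \ref{lem2},
\begin{equation}
p_n - p_{n-1} < (\sqrt{5}-0.5)\,p_n^{(\sqrt{5}-1)/\sqrt{5}},
\end{equation}
valid for $p_{n-1}>C$; this in turn is a consequence of Baker--Harman--Pintz, since $(\sqrt{5}-1)/\sqrt{5}=1-1/\sqrt{5}\approx 0.553$ exceeds the admissible exponent $0.525$.

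I do not anticipate any real obstacle: the content of Theorem \ref{thm3} is already absorbed into Theorem \ref{thm1}, and what remains is the trivial numerical check $\sqrt{5}>40/19$. The one place a careful reader might pause is in confirming that $(\sqrt{5}-1)/\sqrt{5}>0.525$, which is the underlying analytic reason the threshold $40/19$ suffices here, and which I would spell out explicitly for completeness.
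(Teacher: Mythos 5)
Your proposal is correct and follows exactly the route the paper takes: Theorem \ref{thm3} is deduced as the instance $k=\sqrt{5}$ of the ``exponential'' Theorem \ref{thm1}, after checking $\sqrt{5}>40/19$, and your displayed gap bound $(\sqrt{5}-0.5)\,p_n^{(\sqrt{5}-1)/\sqrt{5}}$ is precisely the paper's $1.736\,p_n^{0.552}$ in exact form. Your extra remark that $(\sqrt{5}-1)/\sqrt{5}\approx 0.553>0.525$ is a welcome clarification of why the Baker--Harman--Pintz exponent suffices, but it does not change the argument.
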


\begin{proof}
Theorem \ref{thm3} is a particular case of the ``exponential'' theorem with $k = \sqrt{5} > 40/19$. In this case the evaluation of the difference between consecutive primes takes the following form
\begin{equation}
p_n - p_{n-1} < 1.736 p_n^{0.552}
\end{equation}
Thus according to the ``exponential'' theorem there exists an effectively computable constant $C$ such that for every integer $n > C$ an interval $((n - 1)^{\sqrt{5}}, n^{\sqrt{5}})$ contains a prime number.
\end{proof}

\subsection{``Fractional'' Theorem}

\begin{theorem}\label{thm4}
An interval $(\frac{k-1}{k}n, \frac{k}{k-1}n)$, where $k \geq 2$ is any fixed real number, contains two prime numbers $p, q$ for every integer $n > C$, where $C$ is an effectively computable constant.
\end{theorem}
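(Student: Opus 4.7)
The plan is to split the interval $(\frac{k-1}{k}n, \frac{k}{k-1}n)$ at the point $n$ into the two disjoint open sub-intervals $I^- = (\frac{k-1}{k}n, n)$, of width $n/k$, and $I^+ = (n, \frac{k}{k-1}n)$, of width $n/(k-1)$. If each of $I^-$ and $I^+$ contains at least one prime whenever $n > C$, the two primes so obtained are automatically distinct and both lie in the original interval, and Theorem \ref{thm4} follows. So I would reduce the theorem to two independent one-sided claims, one for each half, and attack them with the same machinery the paper uses for Theorem \ref{thm1}.

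For the left-hand piece I would follow the blueprint of Propositions \ref{prop1}--\ref{prop2}: state as an auxiliary proposition (prime version) that for every prime $p > C_1$ the interval $(p - p/k, p)$ contains a prime, observe by a Lemma \ref{lem1}-style argument that this is equivalent to the consecutive-prime gap bound $p_n - p_{n-1} < p_n/k$ for all $p_{n-1} > C_1$, and verify the latter via the Baker estimate $p_n - p_{n-1} < p_n^{0.525}$ already cited in Lemma \ref{lem2}. The condition $p_n^{0.525} < p_n/k$ is just $p_n > k^{1/0.475}$, which holds beyond an effectively computable threshold depending only on $k$. A Lemma \ref{lem3}-style transfer then upgrades the conclusion from primes to all integers $n > C_1$.

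For the right-hand piece I would apply the same Baker estimate at $x = \lfloor \frac{k}{k-1}n \rfloor$ (or $x-1$ in the borderline case when $\frac{k}{k-1}n$ is an integer) to produce a prime in $[x - x^{0.525}, x]$; this interval sits strictly inside $I^+$ as soon as $x^{0.525} < x - n$, which, using $x \leq \frac{k}{k-1}n$ and $x - n \geq \frac{n}{k-1} - 1$, reduces to an elementary inequality of the shape $n^{0.475} > (k-1)\bigl(\frac{k}{k-1}\bigr)^{0.525} + o(1)$, satisfied for $n$ beyond an effective $C_2(k)$. Taking $C = \max(C_1, C_2)$ completes the argument.

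The main obstacle, such as it is, is pure bookkeeping: phrasing the auxiliary ``prime in the open interval'' statements so that the Lemma \ref{lem1}-style equivalence with consecutive-prime gaps is clean (the point $n$ itself may or may not be prime, and $\frac{k}{k-1}n$ may be an integer), and tracking how $C_1(k)$ and $C_2(k)$ grow with $k$. The genuine analytic input, Baker's theorem \cite{Baker:2001aa}, is reused verbatim from Lemma \ref{lem2}; no stronger prime-gap estimate is needed, because the interval $(\frac{k-1}{k}n, \frac{k}{k-1}n)$ has width of order $n/k$ on each side of $n$, which for any fixed $k \geq 2$ eventually dominates $n^{0.525}$ by a wide margin.
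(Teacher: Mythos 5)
Your proposal is correct and follows essentially the same route as the paper: the paper likewise splits the interval at $n$ into $(\frac{k-1}{k}n, n)$ and $(n, \frac{k}{k-1}n)$ (Propositions \ref{prop4} and \ref{prop3}), reduces each half to a consecutive-prime-gap inequality via an equivalence lemma (Lemmas \ref{lem5} and \ref{lem7}), and transfers from primes to general integers by a minimal-counterexample argument. The only minor differences are that the paper verifies both gap conditions simultaneously using Dusart's bound $p_{n+1} \leq p_n(1 + 0.5/\ln^2 p_n)$ rather than Baker's $p_n^{0.525}$ estimate, and that it treats the right half through the same prime-gap equivalence machinery instead of your direct application of Baker's interval theorem at $x = \lfloor \frac{k}{k-1}n \rfloor$.
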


\begin{proposition}\label{prop3}
An interval $(n, \frac{k}{k-1}n)$, where $k \geq 2$ is an arbitrary fixed real number, contains a prime number $p$ for every integer $n > A$, where $A$ is an effectively computable constant.
\end{proposition}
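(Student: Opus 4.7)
The plan is to mirror the three-step template used for Proposition \ref{prop1}: reduce the interval statement to a gap inequality between consecutive primes, establish the equivalence, and then derive the gap inequality from the Baker--Harman--Pintz bound $p_n - p_{n-1} < p_n^{0.525}$ already invoked in Lemma \ref{lem2}.

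The auxiliary inequality I would work with is
\[
    p_n - p_{n-1} < \tfrac{1}{k-1}\, p_{n-1},
    \qquad\text{equivalently}\qquad
    p_n < \tfrac{k}{k-1}\, p_{n-1},
\]
valid for every pair of consecutive primes $p_{n-1}, p_n$ with $p_{n-1} > A(k)$. The equivalence with Proposition \ref{prop3} is straightforward: given an integer $n > A$, let $p_{n-1}$ be the largest prime not exceeding $n$; for $n$ large enough one has $p_{n-1} > A$, and the next prime $p_n$ then satisfies both $p_n > n \geq p_{n-1}$ and $p_n < \tfrac{k}{k-1}\, p_{n-1} \leq \tfrac{k}{k-1}\, n$, placing $p_n$ inside $(n,\tfrac{k}{k-1}n)$. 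The converse is immediate by specialising Proposition \ref{prop3} to the prime $n = p_{n-1}$.

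To verify the gap inequality for large $p_{n-1}$, I would combine $p_n - p_{n-1} < p_n^{0.525}$ with the (easily derived) fact that for $p_{n-1}$ large this very bound already forces $p_n < 2 p_{n-1}$; then
\[
    p_n - p_{n-1} < p_n^{0.525} < (2 p_{n-1})^{0.525} < 2\, p_{n-1}^{0.525}.
\]
It is therefore enough to require $2\, p_{n-1}^{0.525} \leq \tfrac{1}{k-1}\, p_{n-1}$, i.e., $p_{n-1} \geq \bigl(2(k-1)\bigr)^{1/0.475}$, which together with $p_{n-1} > x_0$ from Lemma \ref{lem2} furnishes the effectively computable constant $A(k)$.

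There is no genuine obstacle here: the proof is essentially bookkeeping of effective constants. The one point worth flagging is the dependence on $k$: as $k \to \infty$ the interval length $n/(k-1)$ shrinks while the threshold $A(k)$ grows roughly like $(2(k-1))^{1/0.475}$, a polynomial in $k$, but for every fixed $k \geq 2$ the constant is finite and explicit.
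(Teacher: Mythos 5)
Your proposal is correct, and its skeleton matches the paper's: reduce the interval statement to the gap inequality $p_n - p_{n-1} < p_{n-1}/(k-1)$ over consecutive primes (this is exactly the content of the paper's Lemmas \ref{lem4} and \ref{lem5}), verify that inequality for all large primes, then transfer from primes to arbitrary integers. You differ in two places. First, the transfer step: you argue directly, taking the largest prime $p_{n-1}\leq n$ and showing the next prime lands in $(n,\tfrac{k}{k-1}n)$, whereas the paper argues by contradiction with a minimal integer counterexample $n_0$ and the inclusion $(p_{n-1},\tfrac{k}{k-1}p_{n-1})\subset(p_{n-1},\tfrac{k}{k-1}n_0)$; your version is cleaner and equivalent. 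Second, the source of the gap bound: you invoke Baker--Harman--Pintz ($p_n-p_{n-1}<p_n^{0.525}$), while the paper (Proposition \ref{prop5}) uses Dusart's explicit estimate $p_{n+1}\leq p_n(1+0.5/\ln^2 p_n)$ for $n>463$, and in fact proves the slightly stronger inequality $p_n-p_{n-1}<p_{n-1}/k$ so that Propositions \ref{prop3} and \ref{prop4} hold simultaneously from a single constant --- which is what Theorem \ref{thm4} needs to get \emph{two} primes. The trade-off is worth noting: Dusart's bound carries fully explicit numerical constants, so the paper's $A$ is genuinely computable, whereas the BHP threshold $x_0$ is effective only ``with enough effort'' and has never been computed, so your $A(k)=\max\bigl(x_0,(2(k-1))^{1/0.475}\bigr)$ inherits that caveat; on the other hand your threshold grows polynomially in $k$ versus the paper's $\exp(\sqrt{k})$. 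Since the interval here has length $\asymp n$, BHP is also considerable overkill --- any Chebyshev- or Bertrand-type gap bound would do --- but nothing in your argument is wrong.
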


\begin{lemma}\label{lem4}
An interval $(p, \frac{k}{k-1}p)$, where $k \geq 2$ is an arbitrary fixed real number, contains a prime number $q$ for every prime $p > A$, where $A$ is an effectively computable constant.
\end{lemma}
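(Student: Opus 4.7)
The plan is to follow the same pattern that proved Lemma~\ref{lem2}, now applied to the additive gap condition $q-p<\frac{p}{k-1}$ rather than a sublinear one. First I would observe, exactly as in the ``only if'' direction of Lemma~\ref{lem1}, that the interval $(p,\tfrac{k}{k-1}p)$ contains some prime if and only if the \emph{immediate successor} prime $q$ of $p$ satisfies $q-p<\frac{p}{k-1}$; this reduces Lemma~\ref{lem4} to a statement purely about consecutive prime gaps, which is the setting in which Baker's estimate can be deployed.

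Next I would invoke Theorem~1 of \cite{Baker:2001aa}, already cited in the proof of Lemma~\ref{lem2}: for every $x>x_0$ the interval $[x-x^{0.525},x]$ contains a prime. Taking $x$ to be the successor $q$ of a prime $p>x_0$, this yields $q-p<q^{0.525}$; combining with the elementary bound $q<2p$ (Bertrand's postulate, applicable since $k\geq 2$ is not needed here but the factor of $2$ is harmless) gives the uniform sublinear estimate
\[
q-p < 2\,p^{0.525}\qquad(p>x_0).
\]
It therefore suffices to guarantee $2\,p^{0.525}\leq\frac{p}{k-1}$, which is equivalent to $p\geq\bigl(2(k-1)\bigr)^{1/0.475}$. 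Setting $A$ to be the least prime exceeding $\max\!\bigl(x_0,\,(2(k-1))^{1/0.475}\bigr)$ then furnishes the required effectively computable constant, since $k$ is fixed and $x_0$ is effectively computable by Baker.

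The main point, such as it is, is not an obstacle but a matter of uniformity: one must check that the chosen threshold $A$ works for \emph{every} prime $p>A$, not merely for one pair of consecutive primes, and that the dependence of $A$ on $k$ (polynomial in $k-1$) is admissible because $k$ is fixed in the statement. The argument is genuinely easier than that of Lemma~\ref{lem2}, because the right-hand side $p/(k-1)$ is linear in $p$ while Baker's gap bound is sublinear, so the two inequalities separate cleanly at an effectively computable threshold depending only on $k$ and $x_0$.
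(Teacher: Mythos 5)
Your argument is correct, and its first step --- reducing the lemma to the consecutive-gap inequality $p_n - p_{n-1} < p_{n-1}/(k-1)$ --- is exactly the paper's Lemma~\ref{lem5}. Where you genuinely diverge is in the quantitative input. The paper does not prove Lemma~\ref{lem4} in isolation: in Proposition~\ref{prop5} it establishes the single stronger inequality $p_n - p_{n-1} < p_{n-1}/k$, which delivers Lemma~\ref{lem4} (via Lemma~\ref{lem5}) and Lemma~\ref{lem6} (via Lemma~\ref{lem7}) simultaneously, and it derives this from Dusart's explicit estimate $p_{n+1} \leq p_n(1 + 0.5/\ln^2 p_n)$ for $n > 463$, yielding the numerical threshold $C = \max(p_r, p_{463})$ with $p_{r-1} < e^{\sqrt{k}} < p_r$. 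You instead import the Baker--Harman--Pintz bound together with Bertrand's postulate to get $q - p < 2p^{0.525}$ and solve $2p^{0.525} \leq p/(k-1)$, arriving at $A$ of order $(2(k-1))^{1/0.475}$. Both routes are sound, and the trade-off is instructive: Dusart's inequality is completely explicit, whereas Baker's $x_0$ is only effectively computable in principle and has never actually been determined, so the paper's choice keeps the constant concrete; on the other hand your threshold depends polynomially on $k$ rather than like $\exp(\sqrt{k})$, and your closing observation is exactly right --- for a gap requirement that is linear in $p$, the deep $0.525$ exponent is overkill, and any explicit $o(p)$ gap bound suffices, which is precisely why the paper reserves Baker's theorem for the ``exponential'' theorem and uses the elementary Dusart bound here.
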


\begin{lemma}\label{lem5}
Lemma \ref{lem4} is true if and only if there is such a constant $A$ that for every pair of neighbouring prime numbers $p_{n-1}, p_n$ with $p_{n-1} > A$ the following inequality is satisfied:
\begin{equation}
p_n - p_{n-1} < p_{n-1} /(k-1)
\end{equation}
\end{lemma}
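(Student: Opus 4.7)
The plan is to prove Lemma \ref{lem5} by establishing both directions of the biconditional, following exactly the template set by the proof of Lemma \ref{lem1}. The key observation is that an interval of the form $(p_{n-1}, \frac{k}{k-1} p_{n-1})$ contains a prime if and only if the next prime $p_n$ after $p_{n-1}$ lies inside it, since by definition $p_n$ is the smallest prime strictly greater than $p_{n-1}$. The bound $p_n - p_{n-1} < p_{n-1}/(k-1)$ has been chosen so that its failure is exactly equivalent to the absence of primes in that interval, which makes the biconditional essentially a restatement.

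For the forward direction, I would assume Lemma \ref{lem4} holds with constant $A$ and take an arbitrary pair of neighbouring primes $p_{n-1}, p_n$ with $p_{n-1} > A$. Applying Lemma \ref{lem4} with $p = p_{n-1}$ produces a prime $q \in (p_{n-1}, \frac{k}{k-1} p_{n-1})$. Since $p_n$ is by definition the least prime strictly exceeding $p_{n-1}$, we have $p_n \leq q < \frac{k}{k-1} p_{n-1}$, and subtracting $p_{n-1}$ from both sides yields the desired inequality $p_n - p_{n-1} < p_{n-1}/(k-1)$.

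For the reverse direction, I would assume the gap bound holds for every pair of neighbouring primes with $p_{n-1} > A$ and let $p > A$ be an arbitrary prime. Setting $p_{n-1} = p$ and letting $p_n$ denote the successor prime, the hypothesis gives $p_n < p_{n-1} + p_{n-1}/(k-1) = \frac{k}{k-1} p$, while $p_n > p$ holds by construction. Therefore $p_n \in (p, \frac{k}{k-1} p)$, which witnesses the conclusion of Lemma \ref{lem4} and completes the equivalence.

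There is no substantive obstacle here: the argument is a direct unfolding of definitions entirely parallel to the proof of Lemma \ref{lem1}, and the numerator $p_{n-1}$ in the bound $p_{n-1}/(k-1)$ was engineered precisely to match the length $\frac{k}{k-1} p_{n-1} - p_{n-1}$ of the target interval. The only delicate point worth flagging is the role of the hypothesis $k \geq 2$, which ensures $k - 1 \geq 1$ so that the quotient is positive and the interval $(p, \frac{k}{k-1} p)$ is nonempty; this is inherited from Lemma \ref{lem4} and requires no additional verification.
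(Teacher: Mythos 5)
Your proof is correct and follows essentially the same route as the paper's: in the forward direction you locate the prime $q$ given by Lemma \ref{lem4} in $(p_{n-1}, \frac{k}{k-1}p_{n-1})$ and use $p_n \leq q$ to bound the gap, and in the reverse direction you observe that the gap bound places $p_n$ itself in that interval. No discrepancies to report.
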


\begin{proof}
Let Lemma \ref{lem4} be true, then the interval $(p_{n-1}, \frac{k}{k-1}p_{n-1})$ contains a prime number $q$ such that $p_{n-1} < p_n \leq q$ and $p_n$ belongs to this interval. Thus we have
\begin{equation}
p_{n-1}< p_n < \frac{k}{k-1}p_{n-1} \text{ and } p_n - p_{n-1} < p_{n-1} /(k-1)
\end{equation}

Let the inequality $p_n - p_{n-1} < p_{n-1} /(k-1)$ be true for any pair $p_{n-1},p_n$ where $p_{n-1} > A$ then:
\begin{equation}
p_{n-1}< p_n< p_{n-1}+ p_{n-1} /(k-1) \text{ and } p_n \text{ belongs to } (p_{n-1}, \frac{k}{k-1}p_{n-1})
\end{equation}

Thus any interval $(p_{n-1}, \frac{k}{k-1}p_{n-1})$ contains a prime number for every prime $p_{n-1} > A$.
\end{proof}

\begin{proposition}\label{prop4}
An interval $(\frac{k-1}{k}n, n)$, where $k \geq 2$ is an arbitrary fixed real number, contains a prime $p$ for every integer $n > B$, where $B$ is an effectively computable constant.
\end{proposition}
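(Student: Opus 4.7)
\textbf{Proof plan for Proposition \ref{prop4}.}

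The plan is to mirror the three-step approach used for Proposition \ref{prop2}, now applied to the lower endpoint $\frac{k-1}{k}n$ instead of an upper cut-off. First I would introduce the prime analogue of the claim: the interval $(\frac{k-1}{k}p, p)$ contains a prime $q$ for every prime $p > B$. By the same argument pattern as Lemma \ref{lem5}, this prime version is equivalent to the gap inequality
\begin{equation*}
p_n - p_{n-1} < \frac{p_n}{k}
\end{equation*}
holding for every pair of neighbouring primes with $p_{n-1} > B$. Indeed, a prime $q \in (\frac{k-1}{k}p_n, p_n)$ must satisfy $q \leq p_{n-1}$, forcing $p_{n-1} > \frac{k-1}{k}p_n$, and conversely this inequality places $p_{n-1}$ inside the interval.

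Second, the gap inequality is easy to verify using the tools already invoked in Lemma \ref{lem2}. Baker's theorem gives $p_n - p_{n-1} < p_n^{0.525}$ for all $p_n > x_0$, so since $k \geq 2$ is fixed, it suffices to ensure $p_n^{0.525} < p_n/k$, i.e.\ $p_n > k^{1/0.475}$. Choosing $B$ to be the smallest prime exceeding $\max\bigl(x_0,\, k^{1/0.475}\bigr)$ yields an effectively computable constant. (For $k = 2$ one could equally well invoke Bertrand's postulate.)

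Third, I would transfer from primes to integers by the same device as Lemma \ref{lem3}. Suppose for contradiction that $n_0 > B$ is the minimal integer for which $(\frac{k-1}{k}n_0, n_0)$ contains no prime, and let $p_n$ be the smallest prime at least $n_0$. If $n_0$ is itself prime, set $p_n = n_0$ and the prime version already fails at $p_n$. Otherwise the previous prime $p_{n-1}$ satisfies $p_{n-1} \leq \frac{k-1}{k}n_0$, so no prime other than $p_n$ lies in $(\frac{k-1}{k}n_0, p_n)$; but $p_n > n_0$ implies $\frac{k-1}{k}p_n > \frac{k-1}{k}n_0$, so
\begin{equation*}
\left(\tfrac{k-1}{k}p_n,\, p_n\right) \subset \left(\tfrac{k-1}{k}n_0,\, p_n\right),
\end{equation*}
and the smaller interval contains no primes, contradicting the prime version at $p_n > B$.

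\emph{The main obstacle.} None of these steps is genuinely hard; the whole argument is routine by analogy with Propositions \ref{prop1}--\ref{prop2}. The only point that requires a little care is the direction of the transfer in the third step: because the critical endpoint is now on the left, one must anchor the contradiction at the smallest prime \emph{at or above} $n_0$ (rather than the next prime beyond $n_0$, as in Lemma \ref{lem3}), and exploit the monotonicity of $x \mapsto \frac{k-1}{k}x$ to obtain the required interval inclusion.
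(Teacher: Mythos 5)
Your proposal is correct and follows essentially the same route as the paper: the paper proves Proposition \ref{prop4} by declaring it the exact analogue of Proposition \ref{prop3}, i.e.\ the prime-interval version (Lemma \ref{lem6}), its equivalence to the gap inequality $p_n - p_{n-1} < p_n/k$ (Lemma \ref{lem7}), verification of that inequality from a known bound on prime gaps, and a minimal-counterexample transfer from primes to integers. The only cosmetic differences are that the paper certifies the gap inequality via Dusart's bound $p_{n+1} \leq p_n(1 + 0.5/\ln^2 p_n)$ in Proposition \ref{prop5} rather than via Baker--Harman--Pintz, and that you explicitly handle the case where the minimal counterexample $n_0$ is itself prime, which the paper glosses over.
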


\begin{lemma}\label{lem6}
An interval $(\frac{k-1}{k}p, p)$, where $k \geq 2$ is an arbitrary fixed real number, contains a prime $q$ for every prime $p > B$, where $B$ is an effectively computable constant.
\end{lemma}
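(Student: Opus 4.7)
The plan is to follow the template the paper has already established for Lemma \ref{lem4} (via Lemma \ref{lem5}), namely first convert Lemma \ref{lem6} into an equivalent inequality about gaps between consecutive primes, then note that the resulting inequality coincides with the one already appearing in Lemma \ref{lem5}, and finally discharge it by the same application of Baker's theorem used in Lemma \ref{lem2}.

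First, I would prove an iff-reformulation directly analogous to Lemma \ref{lem5}: Lemma \ref{lem6} is true if and only if there exists an effectively computable $B$ such that every pair of consecutive primes $p_{n-1} < p_n$ with $p_n > B$ satisfies $p_n - p_{n-1} < p_n/k$. The forward direction uses that $p_{n-1}$ is the largest prime strictly below $p_n$, so if $(\frac{k-1}{k}p_n, p_n)$ contains any prime $q$ then $q \leq p_{n-1} < p_n$, giving $p_{n-1} > \frac{k-1}{k}p_n$, which rearranges to the claimed inequality. The reverse direction is immediate, since the inequality places $p_{n-1}$ itself into the interval.

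Second, I would observe the algebraic identity
\[
p_n - p_{n-1} < \frac{p_n}{k} \iff k\, p_{n-1} > (k-1)\, p_n \iff p_n - p_{n-1} < \frac{p_{n-1}}{k-1},
\]
so the inequality driving Lemma \ref{lem6} is literally the same as the one driving Lemma \ref{lem5}. Hence any effective bound on prime gaps that proves Lemma \ref{lem4} automatically proves Lemma \ref{lem6}. Concretely, invoking Theorem 1 of \cite{Baker:2001aa} (exactly as in Lemma \ref{lem2}) supplies an effectively computable $x_0$ with $p_n - p_{n-1} < p_n^{0.525}$ for all $p_n > x_0$. For this to imply $p_n - p_{n-1} < p_n/k$ it is enough that $p_n^{0.525} \leq p_n/k$, i.e.\ $p_n \geq k^{1/0.475} = k^{40/19}$. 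Taking $B$ to be the smallest prime exceeding $\max(x_0, k^{40/19})$ then finishes the proof.

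The main obstacle is entirely bookkeeping: making the iff-reformulation precise and tracking how $B$ depends effectively on $x_0$ and on the fixed $k$. There is no further mathematical content beyond the elementary algebraic identity and the prime-gap bound already cited, so I do not expect any genuine difficulty beyond mirroring, almost verbatim, the argument the paper has written down for Lemma \ref{lem5}.
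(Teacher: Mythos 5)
Your proof is correct, and its skeleton matches the paper's: you first reduce Lemma \ref{lem6} to the gap inequality $p_n - p_{n-1} < p_n/k$ exactly as the paper does in Lemma \ref{lem7}, and your forward/backward directions are the same as the paper's. Where you diverge is in how the gap inequality is then discharged. The paper does not handle Lemma \ref{lem6} in isolation: Proposition \ref{prop5} establishes the single stronger inequality $p_n - p_{n-1} < p_{n-1}/k$, which implies both $p_{n-1}/(k-1)$ (Lemma \ref{lem5}) and $p_n/k$ (Lemma \ref{lem7}) at once, and it does so via Dusart's explicit bound $p_{n+1} \leq p_n(1 + 0.5/\ln^2 p_n)$, yielding a concrete constant of the shape $\max(p_r, p_{463})$ with $p_r$ near $\exp(\sqrt{k})$. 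You instead observe the cleaner fact that the two gap inequalities $p_n - p_{n-1} < p_n/k$ and $p_n - p_{n-1} < p_{n-1}/(k-1)$ are algebraically \emph{equivalent} (both say $(k-1)p_n < k\,p_{n-1}$) --- a sharper remark than the paper's --- and then invoke Baker--Harman--Pintz with the threshold $p_n \geq k^{40/19}$. Both routes are valid; the trade-off is that Dusart's inequality comes with fully computed numerical constants, so the paper's $B$ is genuinely explicit, whereas your $B$ inherits the only-in-principle effectiveness of the $x_0$ in \cite{Baker:2001aa} (and for moderate $k$ Dusart's threshold $\exp(\sqrt{k/2})$ is smaller than $k^{40/19}$, though yours wins for large $k$). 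Since the paper already leans on \cite{Baker:2001aa} elsewhere (Lemma \ref{lem2}), your choice is entirely consistent with its toolkit.
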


\begin{lemma}\label{lem7}
Lemma \ref{lem6} is true if and only if there is such a constant $B$ that for every pair of neighbouring prime numbers $p_{n-1}, p_n$ where $p_n > B$ the following inequality is satisfied:
\begin{equation}
p_n - p_{n-1} < p_n / k
\end{equation}
\end{lemma}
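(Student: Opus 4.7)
The plan is to mimic the argument used for Lemma \ref{lem1}, since Lemma \ref{lem7} is the ``fractional'' analogue of that result: the additive gap bound $(k-0.5)p_n^{(k-1)/k}$ there is replaced by the multiplicative bound $p_n/k$ here, but the structural equivalence ``prescribed interval below $p_n$ contains a prime'' $\Leftrightarrow$ ``gap $p_n - p_{n-1}$ is small'' is the same. Nothing beyond a single algebraic rearrangement is needed, so I do not expect a genuine obstacle; the only subtlety is to verify that the constant $B$ may be taken identical on both sides.

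For the ``only if'' direction, I would fix an arbitrary pair of neighbouring primes $p_{n-1} < p_n$ with $p_n > B$ and apply Lemma \ref{lem6} with $p = p_n$: this yields a prime $q$ lying in $(\tfrac{k-1}{k}p_n, p_n)$. Since $q$ is a prime strictly less than $p_n$, and $p_{n-1}$ is by hypothesis the prime immediately below $p_n$, we must have $q \leq p_{n-1}$, and therefore $p_{n-1} > \tfrac{k-1}{k}p_n$. Rearranging gives $p_n - p_{n-1} < p_n/k$, which is the inequality claimed.

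For the ``if'' direction, I would assume the gap inequality holds for every consecutive pair $p_{n-1}, p_n$ with $p_n > B$ and verify Lemma \ref{lem6} directly. Given any prime $p > B$, set $p_n := p$ and let $p_{n-1}$ denote the preceding prime. The hypothesis yields $p_{n-1} > p_n - p_n/k = \tfrac{k-1}{k}p_n$, and trivially $p_{n-1} < p_n$, so $p_{n-1}$ is a prime in the interval $(\tfrac{k-1}{k}p, p)$. Since the same threshold $B$ governs both conditions, the equivalence is established.
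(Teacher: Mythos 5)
Your proposal is correct and follows essentially the same route as the paper: both directions rest on the observation that $q\leq p_{n-1}$ forces $p_{n-1}$ into the interval $(\tfrac{k-1}{k}p_n,\,p_n)$, and conversely that the gap bound places $p_{n-1}$ there directly, with the same constant $B$ on both sides. No further comment is needed.
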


\begin{proof}
Let Lemma \ref{lem6} be true, then the interval $(\frac{k-1}{k}p_n, p_n)$ contains a prime $q$ such that $q \leq p_{n-1} < p_n$ and $p_{n-1}$ belongs to this interval. Thus we have
\begin{equation}
(k - 1/k)p_n < p_{n-1}< p_n \text{ and } p_n - p_{n-1} < p_n / k
\end{equation}
Let the inequality $p_n - p_{n-1} < p_n / k$ be true for any pair primes $p_{n-1}, p_n$ with $p_n > B$ then:
\begin{equation} 
p_n - p_n  / k < p_{n-1} < p_n \text{ and } p_{n-1} \text{ belongs to } ((k - 1/k)p_n, p_n)
\end{equation}
Thus any interval $(\frac{k-1}{k}p_n, p_n)$ contains a prime number for every prime $p_n > B$.
\end{proof}

\begin{proposition}\label{prop5}
There exists such an effectively computable constant $C$ that lemma \ref{lem4} and lemma \ref{lem6} are true simultaneously for any pair of consecutive primes $p_{n-1}, p_n$ where $p_{n-1} > C$.
\end{proposition}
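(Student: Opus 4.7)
The plan is to convert both Lemma \ref{lem4} and Lemma \ref{lem6} to their equivalent inequality forms supplied by Lemmas \ref{lem5} and \ref{lem7}, and then to verify the two resulting inequalities simultaneously from a single sharp bound on the prime gap. Concretely, I need an effectively computable $C = C(k)$ such that for every pair of consecutive primes with $p_{n-1} > C$ the joint inequality
\[
p_n - p_{n-1} < \min\!\left\{\frac{p_{n-1}}{k-1},\; \frac{p_n}{k}\right\}
\]
holds. Once this is established, Lemma \ref{lem5} gives Lemma \ref{lem4} and Lemma \ref{lem7} gives Lemma \ref{lem6}, both on the same range $p_{n-1} > C$.

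The main tool is the Baker-Harman-Pintz estimate already invoked in the proof of Lemma \ref{lem2}: there is an effectively computable $x_0$ such that $p_n - p_{n-1} < p_n^{0.525}$ for every pair of consecutive primes with $p_{n-1} > x_0$. Since $k \geq 2$ is fixed, both of the target linear quantities are of size $\Theta(p_n)$, so the gap bound $p_n^{0.525}$ beats each of them for all sufficiently large $p_n$; the task is to say how large, effectively.

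For the second side, $p_n^{0.525} < p_n/k$ is equivalent to $p_n > k^{1/0.475}$. For the first side, note that for $p_{n-1} > x_0$ one has $p_n \leq p_{n-1} + p_{n-1}^{0.525} \leq 2 p_{n-1}$ as soon as $p_{n-1}$ is mildly large, so $p_n^{0.525} \leq 2^{0.525} p_{n-1}^{0.525}$, and it is enough to demand $p_{n-1} > \bigl(2^{0.525}(k-1)\bigr)^{1/0.475}$. Taking $C$ to be any prime exceeding
\[
\max\bigl\{x_0,\; k^{1/0.475},\; (2^{0.525}(k-1))^{1/0.475}\bigr\}
\]
gives an effectively computable threshold, depending only on $k$, on which both lemmas hold at once.

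The main obstacle is essentially bookkeeping rather than analysis: the Baker-Harman-Pintz constant $x_0$ is large but effective in principle, and one must be careful to keep each $k$-dependent threshold in explicit closed form so that the resulting $C$ remains effectively computable and uniform in $k$ over the range $k \geq 2$. Since the extra thresholds are elementary functions of $k$ and no hidden constants enter, there is no genuine analytic difficulty; the conclusion is just that the same range $p_{n-1} > C$ validates both required inequalities simultaneously.
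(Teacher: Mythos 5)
Your proof is correct, but it takes a different route from the paper's. You bound the gap by the Baker--Harman--Pintz estimate $p_n-p_{n-1}<p_n^{0.525}$ and then check the two inequalities of Lemmas \ref{lem5} and \ref{lem7} separately against $\min\{p_{n-1}/(k-1),\,p_n/k\}$, which forces you to convert $p_n^{0.525}$ into a bound in terms of $p_{n-1}$ via $p_n\leq 2p_{n-1}$. The paper instead observes that the single inequality $p_n-p_{n-1}<p_{n-1}/k$ already dominates both targets at once (since $p_{n-1}/k\leq p_{n-1}/(k-1)$ and $p_{n-1}/k<p_n/k$), and verifies it from Dusart's explicit estimate $p_{n+1}\leq p_n\bigl(1+\tfrac{1}{2\ln^2 p_n}\bigr)$ for $n>463$, yielding $C=\max(p_r,p_{463})$ with $p_{r-1}<\exp(k^{0.5})<p_r$. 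The trade-off: your thresholds $k^{1/0.475}$ and $\bigl(2^{0.525}(k-1)\bigr)^{1/0.475}$ grow only polynomially in $k$, much better than the paper's $\exp(\sqrt{k})$, but your constant inherits the Baker--Harman--Pintz $x_0$, which is effective only in principle and has never been computed, whereas Dusart's bound comes with a concrete numerical starting point, making the paper's $C$ genuinely explicit. Both arguments prove the proposition as stated; note only that your intermediate line $p_n\leq p_{n-1}+p_{n-1}^{0.525}$ does not follow verbatim from the gap bound as quoted in the paper (which controls the gap by $p_n^{0.525}$, not $p_{n-1}^{0.525}$), though the conclusion $p_n\leq 2p_{n-1}$ for $p_{n-1}$ large is immediate either way.
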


\begin{proof}
Let $C$ be such that for any pair primes $p_{n-1}, p_n$ with $p_{n-1} > C$, $p_n - p_{n-1} < p_{n-1}/ k$ is true. Then lemma \ref{lem5} and lemma \ref{lem7} will be satisfied simultaneously and lemma \ref{lem4} and lemma \ref{lem6} are true. Let us show that such an effectively computable constant $C$ exists. Indeed, using Proposition 1.10 from \cite{Dusart:2010aa} (Proposition 1.10. For $n > 463$, $p_{n+1} \leq p_n(1 + 0.5/ln^2 p_n))$ we have $p_n - p_{n-1} \leq p_{n-1}/2ln^2 p_{n-1}$. Thus our problem is to find such $n_0$ that for every integer $n > n_0$ the following inequality holds:
\begin{equation}
p_n - p_{n-1} \leq \frac{p_{n-1}}{2ln^2 p_{n-1}} < \frac{p_{n-1}}{k}.
\end{equation}

Since $ln(p_n)$ is a strictly increasing function so there exists such $n_0$ that for every integer $n > n_0$ this inequality is satisfied. Thus we have the following estimate for $C: C = max(p_r, p_{463}),$ where $p_{r-1} < exp(k^{0.5}) < p_r$.
\end{proof}

\begin{proof}[Proof of proposition \ref{prop3}]
Let proposition \ref{prop3} be true then lemma \ref{lem4} is true for every prime $p > C$. Let lemma \ref{lem4} be true for every prime $p \geq p_r$ where $p_{r-1} < C < p_r$ but proposition \ref{prop3} is false for some integers. Let $n_0 > p_r$ be a minimal integer such that an interval $(n_0, \frac{k}{k-1}n_0)$ contains no primes. Let $p_{n-1}, p_n$ be a pair primes such that $p_r \leq p_{n-1} < n_0 < p_n$ then the interval $(p_{n-1}, \frac{k}{k-1}p_{n-1})$ contains no primes. Indeed, the interval $(p_{n-1}, \frac{k}{k-1}n_0) = (p_{n-1}, n_0)\cup[n_0]\cup(n_0, \frac{k}{k-1}n_0)$ contains no primes. Since $(p_{n-1}, \frac{k}{k-1}p_{n-1}) \subset (p_{n-1}, \frac{k}{k-1}n_0)$ so one also contains no primes which is a contradiction.
\end{proof}

\begin{proof}[Proof of proposition \ref{prop4}]
Proving of proposition 3.13 is analogue of proving of proposition 3.10.
\end{proof}

\begin{proof}[Proof of theorem \ref{thm4}]
According proposition \ref{prop5} there is such an effectively computable constant $C$ that proposition \ref{prop3} and proposition \ref{prop4} are true simultaneously for every integer $n > C$. This means that theorem \ref{thm4} is true for every integer $n > C$. Thus the ``fractional'' theorem is true.
\end{proof}

\subsubsection{Application}

\begin{theorem}\label{thm5}
An interval $((n - 1)^3, n^3)$ contains no less than two primes $p, q$ for every integer $n > C$, where $C$ is an effectively computable constant.
\end{theorem}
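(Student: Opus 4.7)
My plan is to apply the ``fractional'' Theorem \ref{thm4} to the integer $m = n(n-1)^2$ with parameter $k = n$. A direct computation gives $\frac{k-1}{k}\,m = \frac{n-1}{n}\cdot n(n-1)^2 = (n-1)^3$ and $\frac{k}{k-1}\,m = \frac{n}{n-1}\cdot n(n-1)^2 = n^2(n-1) = n^3 - n^2$, so the fractional interval $(\frac{k-1}{k}m,\frac{k}{k-1}m) = ((n-1)^3,\, n^3 - n^2)$ is contained in $((n-1)^3, n^3)$. Theorem \ref{thm4} with $k = n$ therefore delivers two primes inside $((n-1)^3, n^3)$, provided only that $m = n(n-1)^2 > C(n)$, where $C(k)$ denotes the effectively computable constant from Theorem \ref{thm4} attached to the parameter $k$.

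It remains to check that $m > C(n)$ for all sufficiently large $n$. The derivation of $C(k)$ in Proposition \ref{prop5} uses Dusart's bound and yields roughly $C(k) \leq \exp(\sqrt{k})$; for $k = n$ this eventually exceeds $m \sim n^3$, so that bound is not strong enough. I would replace Dusart's bound by Baker's bound (Lemma \ref{lem2}), $p_n - p_{n-1} < p_n^{0.525}$: the key inequality $p_n - p_{n-1} < p_{n-1}/k$ is then implied by $p_{n-1}^{0.525} < p_{n-1}/k$, i.e.\ $p_{n-1} > k^{1/0.475}$, giving $C(k) = O(k^{2.11})$. For $k = n$, the integer $m = n(n-1)^2 \sim n^3$ dominates $C(n) = O(n^{2.11})$ for all large $n$, as required.

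The main obstacle is this interplay between the size of $k$ and the size of $C(k)$: since the ratio $(n/(n-1))^3$ of the endpoints of the cube interval tends to $1$, the parameter $k$ used in the fractional theorem must grow linearly in $n$ for the fractional interval to fit inside $((n-1)^3, n^3)$, and the constant $C(k)$ must therefore grow only polynomially in $k$. Dusart's sub-exponential bound invoked in Proposition \ref{prop5} is too weak for this purpose, but Baker's polynomial bound supplies exactly what is needed, and the rest of the argument is routine arithmetic.
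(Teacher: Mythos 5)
Your proposal is correct and follows the same strategy as the paper --- apply the ``fractional'' theorem with a parameter $k$ growing linearly in $n$, after upgrading the constant $C(k)$ from Dusart's sub-exponential bound to a polynomial one via Baker--Harman--Pintz --- but your parametrization is genuinely different and cleaner. The paper centers the fractional interval at the geometric mean: it takes $k = g^{1.5}/(g^{1.5}-(g-1)^{1.5})$ and $n_0 = (g(g-1))^{1.5}+\theta$ with $|\theta|\le \tfrac12$, so that $(\tfrac{k-1}{k}n_0, \tfrac{k}{k-1}n_0)$ is the full interval $((g-1)^3, g^3)$ up to an error of size $|\theta|(g/(g-1))^{1.5}<1$; this forces an extra rounding argument (and an essentially decorative lemma counting five primes below $n_0$). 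Your choice $m = n(n-1)^2$, $k=n$ makes both endpoints exact, $(\tfrac{k-1}{k}m,\tfrac{k}{k-1}m)=((n-1)^3, n^3-n^2)\subset((n-1)^3,n^3)$, so no rounding analysis is needed, at the cost of only capturing a sub-interval of the cube gap (harmless for the statement). Your identification of the real obstacle --- that the fractional theorem's constant must satisfy $C(k)=O(k^{\text{poly}})$, which Dusart cannot deliver but Baker's $p_n-p_{n-1}<p_n^{0.525}$ does, yielding $C(k)=O(k^{1/0.475})=O(k^{40/19})$ --- is precisely the content of the paper's Lemma \ref{lem9}, where the same exponent $40/19$ appears as $C(g)=g^2([g^{2/19}]+1)$. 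One small point of hygiene: Baker's bound is $p_n-p_{n-1}<p_n^{0.525}$, not $p_{n-1}^{0.525}$, so your reduction to $p_{n-1}>k^{1/0.475}$ should carry the bounded factor $(p_n/p_{n-1})^{0.525}$ as the paper does; this only perturbs the implied constant and does not affect the argument.
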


\begin{lemma}\label{lem8}
An interval $(\frac{k-1}{k}n, \frac{k}{k-1}n)$, where $k = g^{1.5} / (g^{1.5} - (g - 1)^{1.5})$, $g \geq 3$ is an integer contains at least two primes for every $n > C(g)$ where $C(g)$ is an effectively computable constant.
\end{lemma}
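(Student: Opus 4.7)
The plan is to deduce Lemma \ref{lem8} as a direct specialization of the ``fractional'' theorem (Theorem \ref{thm4}) applied at the particular value
\[
k = \frac{g^{1.5}}{g^{1.5} - (g-1)^{1.5}}.
\]
The only nontrivial work is to verify that this $k$ meets the hypothesis $k \geq 2$ of Theorem \ref{thm4} for every integer $g \geq 3$. Once this is in hand, Theorem \ref{thm4} immediately supplies an effectively computable constant $C = C(k)$, which we relabel $C(g)$ through the explicit dependence, such that the interval $(\frac{k-1}{k}n, \frac{k}{k-1}n)$ contains at least two primes for every integer $n > C(g)$.

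The first step is to unravel the inequality $k \geq 2$. This is equivalent to $g^{1.5} \geq 2\bigl(g^{1.5} - (g-1)^{1.5}\bigr)$, i.e.\ $(g-1)^{1.5} \geq g^{1.5}/2$, or after raising to the power $2/3$, to $1 - 1/g \geq 2^{-2/3}$. A direct estimate shows this holds precisely when $g \geq (1 - 2^{-2/3})^{-1} \approx 2.70$, so that $g \geq 3$ is the sharp integer condition. A sanity check at the boundary $g = 3$ gives $k = 3\sqrt{3}/(3\sqrt{3} - 2\sqrt{2}) \approx 2.194 \geq 2$, and a Taylor expansion of $g^{1.5} - (g-1)^{1.5} \sim 1.5\, g^{1/2}$ yields $k \sim 2g/3 \to \infty$ as $g \to \infty$, so the full range $g \geq 3$ is covered.

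The second step is the mechanical application: since $k \geq 2$ has been validated, Theorem \ref{thm4} applies with this $k$ and produces at least two primes in $(\frac{k-1}{k}n, \frac{k}{k-1}n)$ for every integer $n$ exceeding some threshold $C(g)$. The constant $C(g)$ inherits effective computability from the constant of Theorem \ref{thm4}, which is ultimately anchored in Dusart's explicit bound $p_{n+1} \leq p_n(1 + 0.5/\ln^2 p_n)$ used in the proof of Proposition \ref{prop5}; one simply substitutes the value of $k$ associated to $g$ into the constant produced there.

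The main obstacle is essentially bookkeeping rather than analysis: one must be careful that $g \geq 3$ is exactly the integer condition forced by $k \geq 2$ (so that no smaller $g$ sneaks in), and that the dependence $C(g)$ is tracked explicitly through the reduction. There is no genuine analytic difficulty beyond what is already encoded in Theorem \ref{thm4}; the lemma is, in effect, a reparametrization tailored so that the endpoints $\frac{k-1}{k}n$ and $\frac{k}{k-1}n$ can later be pinned to $(g-1)^3$ and $g^3$ respectively via the choice $n \approx (g(g-1))^{1.5}$, preparing the ground for Theorem \ref{thm5}.
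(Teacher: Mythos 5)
Your proposal is correct and takes essentially the same route as the paper, which simply invokes the ``fractional'' theorem (Theorem \ref{thm4}) in a one-line proof. Your explicit verification that $k = g^{1.5}/(g^{1.5}-(g-1)^{1.5}) \geq 2$ exactly when $g \geq 3$ (equivalently $1 - 1/g \geq 2^{-2/3}$) is a detail the paper leaves unstated but is needed to meet the hypothesis of Theorem \ref{thm4}, so your write-up is if anything more complete.
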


\begin{proof}
Lemma \ref{lem8} is true due to the ``fractional'' theorem. 
\end{proof}

According to \cite{Baker:2001aa} there exists an effectively computable constant $x_0$ such that for every pair consecutive primes $p_{n-1}, p_n$ where $p_{n-1}> x_0$ the following inequality holds:
\begin{equation}
p_n - p_{n-1} < p_n^{0.525}
\end{equation}

\begin{lemma}\label{lem9}
Let $g > x_0$ then there exists such an effectively computable constant $C(g)$ that for every pair of consecutive primes $p_{n-1}, p_n$ where $p_{n-1} > C(g)$, we have the following inequalities
\begin{equation}
p_n - p_{n-1}< p_n^{0.525}< p_{n-1}/(g^{1.5} / (g^{1.5} - (g - 1)^{1.5}))
\end{equation}
\end{lemma}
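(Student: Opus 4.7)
The plan is to establish the two stated inequalities separately and then combine them to determine the constant $C(g)$. The first inequality $p_n - p_{n-1} < p_n^{0.525}$ is essentially a restatement of Baker's bound (equation (15) in the paper), so it holds automatically as soon as $C(g) \geq x_0$. The real content is the second inequality $p_n^{0.525} < p_{n-1}/k$, where $k = g^{1.5}/(g^{1.5}-(g-1)^{1.5})$.

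For the second inequality, I would argue as follows. Write the desired bound as $k\, p_n^{0.525} < p_{n-1}$. Using the first inequality we have $p_{n-1} > p_n - p_n^{0.525}$, so it suffices to verify the stronger bound
\begin{equation}
k\, p_n^{0.525} < p_n - p_n^{0.525},
\end{equation}
or equivalently $(k+1) p_n^{0.525} < p_n$, i.e. $p_n^{0.475} > k+1$, i.e. $p_n > (k+1)^{40/19}$. Since $k$ depends only on $g$, this threshold is an effectively computable function of $g$.

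Combining the two requirements, I would set $C(g) = p_r$, where $p_r$ is the least prime exceeding $\max\bigl(x_0,\, (k+1)^{40/19}\bigr)$. For any consecutive primes $p_{n-1}, p_n$ with $p_{n-1} > C(g)$, Baker's theorem gives the first inequality, and the threshold on $p_n$ (inherited from $p_{n-1} > C(g)$ and $p_n > p_{n-1}$) gives the second. Hence both inequalities hold simultaneously.

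The proof should present no real obstacle, since all the work reduces to elementary algebraic manipulation once Baker's bound is in hand. The only point requiring a touch of care is that the hypothesis is on $p_{n-1}$ but the quantity $p_n^{0.525}$ involves $p_n$; this is resolved either by noting $p_n > p_{n-1} > C(g)$ or by the substitution $p_{n-1} > p_n - p_n^{0.525}$ used above. The role of the assumption $g > x_0$ in the lemma statement is somewhat cosmetic: it simply guarantees that the regime in which $k$ is bounded by a function of $g$ also lies in the range where Baker's inequality is available, so that the two constants $x_0$ and $(k+1)^{40/19}$ are comparable objects when we form their maximum.
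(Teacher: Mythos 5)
Your proof is correct and follows essentially the same route as the paper: both compare Baker's bound $p_n-p_{n-1}<p_n^{0.525}$ with the target bound $p_{n-1}/k$ and solve for an effectively computable threshold (your reduction via $p_{n-1}>p_n-p_n^{0.525}$ to $p_n>(k+1)^{40/19}$ is just a slightly different algebraic rearrangement of the paper's $(p_n/p_{n-1})^{0.525}k<p_{n-1}^{0.475}$). The only difference worth noting is that the paper asserts the specific closed form $C(g)=g^2([g^{2/19}]+1)$, which it needs later in Lemma \ref{lem11} to verify $2C(g)<(g(g-1))^{1.5}$, whereas your $C(g)=\max\bigl(x_0,(k+1)^{40/19}\bigr)$, while perfectly adequate for the lemma as stated, would require the additional observation that $k+1$ is of order $2g/3$ to recover that downstream estimate.
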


\begin{proof}
Our goal is to obtain an estimate of $C(g)$ comparing two evaluations of the difference between consecutive primes. Our problem is to define $C(g)$ as a corollary of the inequality
\begin{equation}
p_n^{0.525} < p_{n-1}/(g^{1.5} / (g^{1.5} - (g - 1)^{1.5})
\end{equation}
Thus $(p_n / p_{n-1})^{0.525} (g^{1.5} / (g^{1.5}- (g - 1)^{1.5}) < p_{n-1}^{0.475}$ continuing our calculation we will obtain the constant $C(g)$ which has the form $C(g) = g^2([g^{2/19}] +1)$.Thus the inequality $p_n - p_{n-1} < p_{n-1}/ k$, where $k = g^{1.5} / (g^{1.5} - (g - 1)^{1.5})$ is true for every pair primes $p_{n-1}, p_n$ with $p_{n-1} > C(g)$.
\end{proof}

\begin{lemma}\label{lem10}
The interval $(\frac{k-1}{k}n, \frac{k}{k-1}n)$, where $k = g^{1.5} / (g^{1.5} - (g - 1)^{1.5})$ contains at least two primes for every integer $n > C(g)$, where $C(g) = g^2([g^{2/19}] +1)$ with $g > x_0$.
\end{lemma}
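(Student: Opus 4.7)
The plan is to combine Lemma \ref{lem8}, which delivers the two-primes conclusion via the ``fractional'' theorem, with Lemma \ref{lem9}, which supplies an explicit value of the threshold $C(g)$. Lemma \ref{lem8} only asserts the existence of an effectively computable $C(g)$, whereas Lemma \ref{lem9} pins down $C(g) = g^{2}([g^{2/19}] + 1)$ as a value for which the uniform gap inequality $p_n - p_{n-1} < p_{n-1}/k$ is forced by Baker's bound $p_n - p_{n-1} < p_n^{0.525}$ whenever $p_{n-1} > C(g)$. Lemma \ref{lem10} is therefore essentially a bookkeeping statement that traces this explicit $C(g)$ through the chain of equivalences already built up for the ``fractional'' theorem.

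First I would apply Lemma \ref{lem9} to obtain $p_n - p_{n-1} < p_{n-1}/k$ for every consecutive prime pair with $p_{n-1} > C(g)$; the hypothesis $g > x_0$ is used only to guarantee that Baker's constant is applicable at the relevant scale. Because $k > 1$, this single inequality immediately implies both $p_n - p_{n-1} < p_{n-1}/(k-1)$ (since $p_{n-1}/k < p_{n-1}/(k-1)$) and $p_n - p_{n-1} < p_n/k$ (since $p_{n-1} < p_n$). Hence Lemma \ref{lem5} and Lemma \ref{lem7} hold simultaneously with the same constant $C(g)$, and Proposition \ref{prop5}'s mechanism then delivers Lemma \ref{lem4} and Lemma \ref{lem6} in parallel: both intervals $(p, \tfrac{k}{k-1}p)$ and $(\tfrac{k-1}{k}p, p)$ contain a prime for every prime $p > C(g)$.

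Next I would pass from primes to integers by repeating the minimal-counterexample arguments from the proofs of Proposition \ref{prop3} and Proposition \ref{prop4}. If $(n_0, \tfrac{k}{k-1}n_0)$ were prime-free for some minimal $n_0 > C(g)$, then sandwiching $n_0$ between consecutive primes $p_{n-1} < n_0 < p_n$ would force $(p_{n-1}, \tfrac{k}{k-1}p_{n-1})$ to be prime-free as well, contradicting Lemma \ref{lem4}; the symmetric argument takes care of $(\tfrac{k-1}{k}n, n)$ via Lemma \ref{lem6}. Unioning the two prime locations places two distinct primes inside $(\tfrac{k-1}{k}n, \tfrac{k}{k-1}n)$, which is the desired conclusion.

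The one genuine difficulty is the quantitative calibration inside Lemma \ref{lem9}, namely verifying that $C(g) = g^{2}([g^{2/19}] + 1)$ really does make the comparison $(p_n/p_{n-1})^{0.525}\bigl(g^{1.5}/(g^{1.5} - (g-1)^{1.5})\bigr) < p_{n-1}^{0.475}$ hold. Since that step has already been taken in Lemma \ref{lem9}, the present lemma collapses to reassembling Lemmas \ref{lem4}--\ref{lem8} with this explicit $C(g)$ in place, and no new analytic input is needed.
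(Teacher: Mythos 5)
Your proposal is correct and takes essentially the same route as the paper: the paper's own proof is a one-line appeal to Lemma \ref{lem9} together with the ``fractional'' theorem, and your argument is simply that appeal spelled out (Lemma \ref{lem9} gives $p_n - p_{n-1} < p_{n-1}/k$ for $p_{n-1} > C(g)$, which feeds Proposition \ref{prop5}'s mechanism and then Propositions \ref{prop3} and \ref{prop4} with the explicit constant). The expanded bookkeeping you provide is a faithful reconstruction of what the paper leaves implicit.
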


\begin{proof}
Lemma \ref{lem10} is true due to lemma \ref{lem9} and the ``fractional'' theorem.
\end{proof}

\begin{lemma}\label{lem11}
The interval $(C(g), (g(g - 1))^{1.5})$ contains no less five prime numbers.
\end{lemma}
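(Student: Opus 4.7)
My plan is to count the primes in $(C(g),(g(g-1))^{1.5})$ directly, using Baker's gap estimate $p_n - p_{n-1} < p_n^{0.525}$ (valid for $p_{n-1}>x_0$) together with the explicit form $C(g)=g^{2}([g^{2/19}]+1)$. Throughout I work under the standing hypothesis $g>x_{0}$ already used in Lemmas \ref{lem9} and \ref{lem10}; this forces $C(g)\geq 2g^{2}>x_{0}$, so Baker's bound applies to every consecutive gap whose smaller endpoint lies in the interval under consideration.

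Set $U:=(g(g-1))^{1.5}$, let $p_{0}$ be the largest prime with $p_{0}\leq C(g)$, and let $p_{m+1}$ be the smallest prime with $p_{m+1}\geq U$, so that the primes of $(C(g),U)$ are exactly $p_{1}<\cdots<p_{m}$. Summing the gap bound telescopically over $i=1,\ldots,m+1$ yields $p_{m+1}-p_{0}<(m+1)\,p_{m+1}^{0.525}$, and since $p_{m+1}-p_{0}>U-C(g)$ this rearranges to $m\geq (U-C(g))/p_{m+1}^{0.525}-1$. A further application of the gap bound gives $p_{m+1}<U+U^{0.525}$, so $p_{m+1}^{0.525}\leq 2U^{0.525}$ for $g$ large, which yields the cleaner lower bound $m\geq (U-C(g))/(2U^{0.525})-1$.

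I would then estimate both sides asymptotically in $g$. Since $(g-1)^{1.5}\geq g^{1.5}(1-3/(2g))$ for $g$ large, one has $U\geq g^{3}-2g^{2}$, while $C(g)\leq g^{40/19}+g^{2}$ with $40/19<3$, so $U-C(g)$ grows like $g^{3}$. The denominator $U^{0.525}\leq (g(g-1))^{0.7875}$ grows only like $g^{1.575}$, so the ratio $(U-C(g))/U^{0.525}$ grows like $g^{1.425}\to\infty$ and exceeds $12$ for every $g$ above a small effective threshold; since $g>x_{0}$ is certainly larger than this threshold, we conclude $m\geq 5$.

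The main (and only) obstacle is the mild circularity that it is $p_{m+1}^{0.525}$, rather than $U^{0.525}$, which appears on the right of the telescoped inequality; this is handled by applying Baker's bound one more time to control $p_{m+1}$ in terms of $U$, as above. Beyond that no new analytic input is required, and the argument rests entirely on the prime gap estimate already invoked in Lemmas \ref{lem2} and \ref{lem9}.
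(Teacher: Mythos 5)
Your argument is sound in substance but takes a genuinely different route from the paper. The paper's own proof is a two-line containment: by Ramanujan's evaluations \cite{Ramanujan:1919aa} the interval $(C(g),2C(g))$ already contains at least five primes once $C(g)>20$, and one then verifies the single inequality $2C(g)\leq 2(g^{40/19}+g^2)<(g(g-1))^{1.5}$ (valid already for $g>20$), so those five primes lie in the larger interval. You instead telescope Baker's gap bound across $(C(g),U)$ with $U=(g(g-1))^{1.5}$ to show the number of primes there is at least of order $g^{1.425}$; this proves far more than five primes, reuses only the machinery already invoked in Lemmas \ref{lem2} and \ref{lem9}, and avoids the extra citation of Ramanujan, whereas the paper's version buys a fully explicit, small threshold that does not lean on the uncomputed constant $x_0$ for this step. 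Three small repairs you should make to your version: (i) the smaller endpoint $p_0$ of your first gap satisfies $p_0\leq C(g)$ and so does not lie in the interval; justify Baker's bound for that gap via $p_0>C(g)/2\geq g^2>x_0$ (Bertrand); (ii) the estimate $p_{m+1}^{0.525}\leq 2U^{0.525}$ needs the intermediate bound $p_{m+1}<2U$, which does follow from $p_{m+1}<p_m+p_{m+1}^{0.525}<U+p_{m+1}^{0.525}$ as you indicate, but should be written out; (iii) the closing remark that $g>x_0$ ``is certainly larger'' than your small numerical threshold is not literally justified, since $x_0$ has never been computed --- simply require $g>\max(x_0,g_0)$ for your explicit $g_0$, which is harmless since the lemma is only applied under the standing hypothesis $g>x_0$ anyway.
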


\begin{proof}
According to Ramanujan's evaluations \cite{Ramanujan:1919aa} the interval $(C(g), 2C(g))$ where $C(g) > 20$ contains at least five primes. Let us show that an inequality $2C(g) < (g(g - 1))^{1.5}$ is satisfied. Indeed, the inequality $2C(g) <2(g^{40/19} + g^2) < (g(g - 1))^{1.5}$ takes place already under $g > 20$.
\end{proof}

\begin{proof}[Proof of Theorem \ref{thm5}]
Let us take such an integer $n_0 = (g(g - 1))^{1.5} + \theta$ that $|\theta| \leq \frac{1}{2}$. Since the inequality $(g(g - 1))^{1.5} > 2C(g)$ is true so the integer $n_0 > 2C(g) - 1 > C(g)$. The interval $(C(g), n_0)$ contains not less five prime numbers. Further according to the ``fractional'' theorem an interval $((g - 1/g)^{1.5}n_0, (g/g - 1)^{1.5}n_0)$ contains at least two primes $p, q$. Thus we have the inequality:
\begin{equation}
(g - 1/g)^{1.5} n_0 = (g - 1)^3 + \theta(g - 1/g)^{1.5} < p, q < g^3 + \theta(g /g - 1)^{1.5} = (g /g - 1)^{1.5}n_0
\end{equation}

Since $\max(|\theta|(g - 1/g)^{1.5}, |\theta|(g /g - 1)^{1.5}) < 1$ is true therefore $(g - 1)^3 < p, q < g^3$. Thus an interval $((g - 1)^3, g^3)$ contains at least two prime numbers for every integer $g > x_0$.
\end{proof}

\section{Conclusion}

In this work, we  presented a method proving of Diophantine inequalities with primes through the use of auxiliary inequalities and available evaluations of the difference between consecutive primes. By applying this method we have proved a number of new results: the ``exponential'' and ``fractional'' theorems, Ingham's theorem with two primes; and a fresh proof of  proof of Ingham's theorem with effectively computable constant. The method of proving Legendre's and Oppermann's conjectures as well as Bertrand's postulate using auxiliary inequalities and expected evaluations of the difference between consecutive primes \cite{Pintz:2009aa} is described in \cite{Sidokhine:2014aa}, \cite{Sidokhine:2015aa} and \cite{Sidokhine:2015ab}.

\bibliography{references}
\bibliographystyle{plain}

\end{document}